\documentclass[11pt]{amsart}
\hfuzz = 10pt

\usepackage{verbatim, graphicx, rotating, rotfloat, lscape}
\usepackage{morefloats}

\addtolength{\textheight}{1.4\topmargin}
\addtolength{\textwidth}{.7\oddsidemargin}
\addtolength{\textwidth}{.7\evensidemargin}
\setlength{\topmargin}{.3\topmargin}
\setlength{\oddsidemargin}{.3\oddsidemargin}
\setlength{\evensidemargin}{.3\evensidemargin}
\setlength{\parindent}{0in}

\parskip = 3.0pt


\usepackage{url}
\usepackage{hyperref}
\usepackage{amssymb}
\usepackage{multirow}

\input xy
\xyoption{all}
\input epsf


\newlength{\tabwidth}
\newlength{\tabheight}
\setlength{\tabwidth}{2ex}
\setlength{\tabheight}{2ex}
\newlength{\tabrule}
\newlength{\tabwidthx}
\newlength{\tabheightx}

\def\gentabbox#1#2#3#4{\vbox to \tabheight{\setlength{\tabrule}{#3}%
  \setlength{\tabwidthx}{#1\tabwidth}\addtolength{\tabwidthx}{\tabrule}%

\setlength{\tabheightx}{#2\tabheight}\addtolength{\tabheightx}{-\tabheight}%
  \hbox to #1\tabwidth{%
    \hspace{-0.5\tabrule}\rule{\tabrule}{#2\tabheight}\hspace{-\tabrule}%
    \vbox to #2\tabheight{\hsize=\tabwidthx%
      \vspace{-0.5\tabrule}\hrule width\tabwidthx height\tabrule%
      \vspace{-0.5\tabrule}\vfil%
      \hbox to \tabwidthx{\hss#4\hss}%
        \vfil\vspace{-0.5\tabrule}%
      \hrule width\tabwidthx height\tabrule\vspace{-0.5\tabrule}}%
    \hspace{-\tabrule}\rule{\tabrule}{#2\tabheight}\hspace{-0.5\tabrule}}%
  \vspace{-\tabheightx}}}
\def\genblankbox#1#2{\vbox to \tabheight{\vfil\hbox to
#1\tabwidth{\hfil}}}



\newcommand{\excise}[1]{}





\newcommand{\field}{\mathbb}
\newcommand{\liealgebra}{\mathfrak}
\newcommand{\la}{\liealgebra}


\newcommand{\C}{{\field C}}

\newcommand{\N}{{\mathbb N}}


\renewcommand{\b}{\liealgebra b}

\newcommand{\n}{{\la n}}







\newtheorem{prop}{Proposition}[section]

\newtheorem{lemma}[prop]{Lemma}
\newtheorem{theorem}[prop]{Theorem}

\newtheorem{corollary}[prop]{Corollary}

\theoremstyle{definition}

\newtheorem{remark}[prop]{Remark}
\newtheorem{example}[prop]{Example}

\newtheorem{case}{Case}
\newtheorem{subcase}{Case}
\newtheorem{subsubcase}{Case}
\newtheorem{subsubsubcase}{Case}
\newtheorem{question}[prop]{Question}
\numberwithin{subcase}{case}
\numberwithin{subsubcase}{subcase}
\numberwithin{subsubsubcase}{subsubcase}

\newtheorem{definition}[prop]{Definition}

\newcommand{\caI}{\mathcal{I}}

\newcommand{\caO}{\mathcal{O}}

\begin{document}
\title[Bruhat order on clans]{The Bruhat order on clans}

\author{Benjamin J. Wyser}
\date{\today}


\begin{abstract}
We give an explicit description of the closure containment order (or ``Bruhat order'') on the set of
orbits of $GL_p \times GL_q$ on the flag variety $GL_{p+q}/B$, relative to the parametrization of the orbits by
combinatorial objects called ``clans''.  This leads to a corresponding description of the closures
of such orbits as sets of flags satisfying certain incidence conditions encoded by the parametrizing clans.
\end{abstract}

\maketitle

\section{Introduction}
Let $G$ be a reductive algebraic group and $B$ a Borel subgroup.  Let $\theta$ be an involution of $G$ (i.e. an 
automorphism with $\theta^2 = id$), and let $K=G^{\theta}$ be the subgroup of $G$ consisting of elements fixed by 
$\theta$.  $K$ is called a \textit{symmetric subgroup}.  $K$ acts on the flag variety $G/B$ with finitely many orbits,
and the geometry of these orbits and their closures is important in the representation theory of a certain real Lie
group, an associated ``real form'' of $G$.

In \cite{Richardson-Springer-90}, Richardson and Springer considered a certain partial order on the set of $K$-orbits, defined by $\caO_1 \leq \caO_2$ if and only if $\overline{\caO_1} \subseteq \overline{\caO_2}$.  They called this order the ``Bruhat order'', by analogy with the case of the Bruhat order on Schubert varieties.

In some specific cases, this Bruhat order is understood in a very explicit way.  For example, the set of $O_n$-orbits
(resp. $Sp_{2n}$-orbits) on $GL_n/B$ (resp. $GL_{2n}/B$) is parametrized by the set of involutions in $S_n$ (resp. the set
of fixed point-free involutions in $S_{2n}$), and the Bruhat order in each case is just the restriction of the ordinary
Bruhat order on the corresponding symmetric group.  This makes it easy to compare two orbits directly, without resorting to recursive computation, since the Bruhat order on $S_n$ is well known to be given by
\begin{equation}\label{eq:bruhat}
u \leq v \Leftrightarrow r_u(i,j) \geq r_v(i,j) \text{ for all } i,j,
\end{equation}
where $r_u(i,j) := \#\{k \leq i \mid u(k) \geq j\}$.

In addition to giving a very explicit understanding of the poset of $K$-orbits, this understanding of Bruhat order also
makes it straightforward to describe the orbit closures as sets of flags.  In the two cases above, such a description is
given in \cite{Wyser-13-TG}.  In the case of type $A$ Schubert varieties, \eqref{eq:bruhat} implies that the Schubert
variety $X^w = \overline{BwB/B}$ is described set-theoretically as follows:
\begin{equation}\label{eq:schubert-var}
X^w = \{F_{\bullet} \in GL_n/B \mid \dim(F_i \cap E_j) \geq r_w(i,j) \ \forall i,j\},
\end{equation}
where $E_{\bullet}$ denotes a fixed flag.

For symmetric pairs $(G,K)$ other than those mentioned above, the Bruhat order on $K$-orbits is not understood quite as
explicitly.  Using results of \cite{Richardson-Springer-90} together with explicit descriptions of the ``weak order'' (a
different partial order on the orbits, analogous to the weak Bruhat order on Schubert varieties, which is weaker than the
full Bruhat order) in \cite{Matsuki-Oshima-90}, one \textit{can} compute the Bruhat order in many examples.  However, the computation is recursive in nature, and does not obviously imply a description of the Bruhat order as explicit as \eqref{eq:bruhat}.  Consequently, in such cases we do not necessarily know a set-theoretic description of the $K$-orbit closures as explicit as \eqref{eq:schubert-var}.

The purpose of this paper is to give an explicit description of Bruhat order, similar in nature to \eqref{eq:bruhat}, and a
corresponding description of $K$-orbit closures as sets of flags, along the lines of \eqref{eq:schubert-var}, in a specific case, namely that of the type $A$
symmetric pair $(G,K)=(GL_{p+q},GL_p \times GL_q)$.  Taking the
involution $\theta$ to be conjugation by the diagonal matrix with $p$ $1$'s followed by $q$ $-1$'s on the diagonal,
$K$ is realized in the obvious way as the subgroup of $GL_{p+q}$ consisting of a $p \times p$ upper-left block, a
$q \times q$ lower-right block, and $0$'s outside of these blocks.  The orbits in this case are parametrized by what
have commonly been called ``clans'' \cite{Matsuki-Oshima-90,Yamamoto-97}.

\begin{definition}
A \textbf{clan of signature $(p,q)$} is an involution in $S_{p+q}$ with each fixed point decorated by either a $+$ or
a $-$ sign, in such a way that the number of $+$ fixed points minus the number of $-$ fixed points is $p-q$.  (If
$p < q$, then there should be $q-p$ more $-$ signs than $+$ signs.)

A clan is depicted by a string $c_1 \hdots c_n$ of $p+q$ characters.  Where the underlying involution fixes
$i$, $c_i$ is either a $+$ or a $-$, as appropriate.  Where the involution interchanges $i$ and $j$, 
$c_i=c_j\in \N$ is a matching pair of natural numbers.  (A different natural number is used for each 
pair of indices exchanged by the involution.)
\qed
\end{definition}

For example, suppose $n = 3$, and $p = 2, q = 1$.  Then we must consider all clans of length $3$ with one more $+$ than $-$ (since $p-q = 1$).  There are $6$ such.  They are displayed in Figure 1, which depicts the Bruhat order.  The minimal (closed) orbits are those at the bottom of the graph; their clans consist only of signs.  The open, dense orbit is the unique maximal orbit in the graph.  The larger case of $n=4$ and $p=q=2$ is given in Figure 2.

\begin{figure}[h!]
	\caption{$(GL(3,\C),GL(2,\C) \times GL(1,\C))$}\label{fig:type-a-2-1}
	\centering
	\includegraphics[scale=0.5]{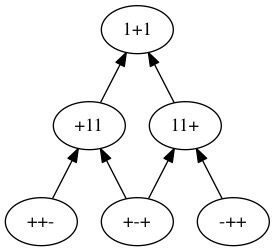}
\end{figure}

\begin{figure}[h!]
	\caption{$(GL(4,\C),GL(2,\C) \times GL(2,\C))$}\label{fig:type-a-2-2}
	\centering
	\includegraphics[scale=0.5]{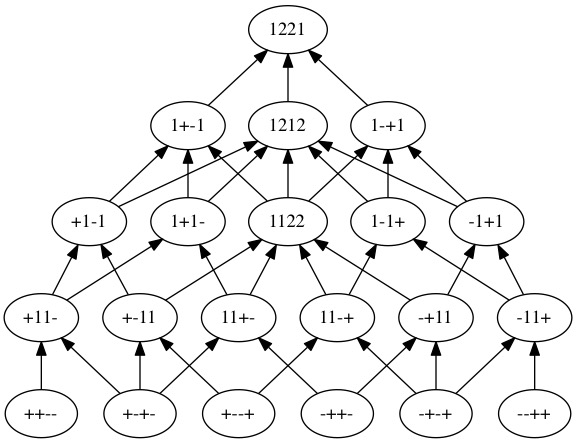}
\end{figure}

To state our results precisely, we define the following notations associated to any clan $\gamma=c_1 \hdots, c_n$,
and to any $i=1,\hdots,n$ or $i,j$ with $1 \leq i < j \leq n$.
\begin{enumerate}
	\item $\gamma(i; +) = $ the total number of plus signs and pairs of equal natural numbers occurring among $c_1 \hdots c_i$;
	\item $\gamma(i; -) = $ the total number of minus signs and pairs of equal natural numbers occurring among $c_1 \hdots c_i$; and
	\item $\gamma(i; j) = $ the number of pairs of equal natural numbers $c_s = c_t \in \N$ with $s \leq i < j < t$.
\end{enumerate}

As examples, for the $(2,2)$-clan $\gamma=1+1-$,
\begin{enumerate}
	\item $\gamma(i; +) = 0,1,2,2$ for $i=1,2,3,4$;
	\item $\gamma(i; -) = 0,0,1,2$ for $i=1,2,3,4$; and
	\item $\gamma(i;j) = 1,0,0,0,0,0$ for $(i,j) = (1,2), (1,3), (1,4), (2,3), (2,4), (3,4)$.
\end{enumerate}

\begin{theorem}\label{thm:bruhat}
 Let $\gamma,\tau$ be $(p,q)$-clans, and let $Y_{\gamma},Y_{\tau}$ be the corresponding $K$-orbit closures.
Then $\gamma \leq \tau$ (meaning $Y_{\gamma} \subseteq Y_{\tau}$) if and only if the following three inequalities
hold for all $i,j$:
\begin{enumerate}
 \item $\gamma(i;+) \geq \tau(i;+)$;
 \item $\gamma(i;-) \geq \tau(i;-)$;
 \item $\gamma(i;j) \leq \tau(i;j)$.
\end{enumerate}
\end{theorem}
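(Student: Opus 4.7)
\emph{Geometric setup.} The plan is to prove the two directions separately, using a geometric interpretation of the three clan statistics as orbit-invariant dimensions/ranks. Let $V_+$ and $V_-$ denote the $\pm 1$-eigenspaces of $\theta$, so $K \cong \mathrm{GL}(V_+) \times \mathrm{GL}(V_-)$, and let $\pi_+ : \C^{p+q} \to V_+$ be the projection. Using an explicit (e.g.\ Yamamoto) orbit representative of $\caO_\gamma$, I would verify by direct inspection of the basis vectors that for every $F_\bullet \in \caO_\gamma$,
\[
\gamma(i;+) = \dim(F_i \cap V_+), \qquad \gamma(i;-) = \dim(F_i \cap V_-),
\]
and $\gamma(i;j)$ equals the rank of the linear map
\[
\varphi_{ij} : F_i \longrightarrow \C^{p+q}/F_j, \qquad v \longmapsto \pi_+(v) + F_j.
\]
Each of these quantities is constant on the $K$-orbit $\caO_\gamma$, and (by elementary book-keeping) the three families of statistics together determine the clan $\gamma$ uniquely.

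\emph{``Only if'' direction.} The functions $F_\bullet \mapsto \dim(F_i \cap V_\pm)$ are upper semi-continuous on $G/B$, being kernel dimensions of bundle maps (for example, $F_i \cap V_+$ is the kernel of $\mathcal{F}_i \to \C^{p+q}/V_+$). The function $F_\bullet \mapsto \mathrm{rk}(\varphi_{ij})$ is lower semi-continuous, as $\varphi_{ij}$ is a morphism of vector bundles $\mathcal{F}_i \to \C^{p+q}/\mathcal{F}_j$ over $G/B$. Since $Y_\gamma \subseteq Y_\tau$ is equivalent to $\caO_\gamma \subseteq \overline{\caO_\tau}$, specializing the generic values on $\caO_\tau$ to those on $\caO_\gamma$ immediately yields $\gamma(i;\pm) \geq \tau(i;\pm)$ and $\gamma(i;j) \leq \tau(i;j)$.

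\emph{``If'' direction.} Assume the three families of inequalities hold. I would induct on $d := \dim Y_\tau - \dim Y_\gamma$. When $d = 0$, the inequalities, together with a length formula $\ell(\gamma)$ for clans that is strictly monotone in the three statistics, force all inequalities to be equalities; by the uniqueness asserted above, $\gamma = \tau$. When $d > 0$, I would produce an intermediate clan $\gamma'$ that covers $\gamma$ in the Bruhat order and still satisfies the three inequalities with $\tau$; the inductive hypothesis applied to $(\gamma', \tau)$ then gives $Y_{\gamma'} \subseteq Y_\tau$, and combining with $Y_\gamma \subseteq Y_{\gamma'}$ completes the argument. The pool of candidate covering moves is furnished by the weak-order covering relations on clans described by Matsuki--Oshima, augmented by the ``pair'' covers of Richardson--Springer needed to generate the full Bruhat order from the weak order. \emph{The main obstacle} is precisely the existence of an admissible move: given $\gamma \ne \tau$ satisfying the inequalities, one must always be able to locate a covering move on $\gamma$ whose target still sits below $\tau$ in the three inequalities. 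I expect this to require a careful case analysis keyed to the leftmost index at which the statistics of $\gamma$ and $\tau$ first disagree, with a separate verification for each local configuration of clan characters (adjacent signs, an open pair endpoint, a closed pair, etc.) and each of the Matsuki--Oshima/Richardson--Springer move types.
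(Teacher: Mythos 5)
Your overall skeleton matches the paper's proof quite closely: both directions are handled the same way at a high level, and you correctly identify the genuine difficulty as the interpolation step (given $\gamma < \tau$, produce a covering $\gamma'$ of $\gamma$ that still sits below $\tau$). This is exactly the content of the paper's Theorem \ref{thm:covers-combinatorial-order}, which occupies most of Section 2. Your ``only if'' argument via semicontinuity of $\dim(F_i \cap V_\pm)$ and $\mathrm{rk}(\varphi_{ij})$ is an equivalent repackaging of the paper's observation that the locus $\mathcal{Y}_\tau$ cut out by the inequalities is closed (it is locally defined by vanishing of minors) and contains $Q_\tau$, hence $Y_\tau$.

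Where your plan genuinely diverges from the paper is in how you intend to justify $Y_\gamma \subseteq Y_{\gamma'}$ once a covering move has been found. You propose to draw the pool of moves from Matsuki--Oshima's weak order together with Richardson--Springer ``pair'' covers, so that the closure containment for each move comes pre-packaged. The paper instead runs the case analysis first, discovers that the combinatorial covers are ten explicit pattern replacements (e.g.\ $+- \to 11$, $11+\to 1+1$, $1122\to 1212$, $1122\to 1+-1$, $1212\to 1221$, etc.), and then for each of these moves writes down a one-parameter curve $k(t)\in K$ and Yamamoto representatives of the two orbits, showing directly that $\lim_{t\to 0} k(t)\cdot F_\bullet$ lands in the smaller orbit. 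This is more self-contained and avoids the issue your route would face: most of the ten pattern moves act on non-adjacent positions and are not single weak-order (Matsuki--Oshima) covers, and the Richardson--Springer machinery, as the paper itself notes, only yields the closure order recursively rather than supplying an explicit stock of containment-preserving moves. So for your plan to close, you would still need to either decompose each combinatorial cover into a chain of genuine weak-order steps (not obvious, and not what happens) or verify the containments by hand --- which is the degeneration argument the paper carries out.

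Two smaller points. First, the paper does not need your base case via a ``strictly monotone length formula''; the induction is simply on a covering chain in the finite combinatorial poset, and Proposition \ref{prop:rank-nums-determine-clan} (rank numbers determine the clan) handles the $\gamma=\tau$ case. If you insist on inducting on $\dim Y_\tau - \dim Y_\gamma$ you do incur the obligation to prove that dimension is strictly monotone in the combinatorial order, which is a consequence of, not an input to, the covering analysis. Second, the case analysis you flag as ``keyed to the leftmost index at which the statistics first disagree'' is exactly the paper's strategy (the index $f$, which the paper relates to Incitti's ``difference index'' for involutions), and the paper leans on Incitti's classification of covers in the Bruhat order on involutions of $S_n$ via Corollary \ref{cor:bruhat-involutions} to shorten several subcases --- a tool worth adding to your toolkit if you attempt to fill in the details.
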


Now define the following notations.  Let $E_p = \C \cdot \left\langle e_1,\hdots,e_p \right\rangle$ be the span of
the first $p$ standard basis vectors, and let
$\widetilde{E_q} = \C \cdot \left\langle e_{p+1},\hdots,e_n \right\rangle$ be the span of the last $q$ standard
basis vectors.  Let $\pi: \C^n \rightarrow E_p$ be the projection onto $E_p$.

Then from Theorem \ref{thm:bruhat}, we can deduce the following explicit set-theoretic description of $K$-orbit closures.

\begin{corollary}\label{cor:orbit-closure}
With notation as above, $Y_{\gamma}$ is precisely the set of all flags $F_{\bullet}$ satisfying the following three conditions
for all $i,j$:
\begin{enumerate}
 \item $\dim(F_i \cap E_p) \geq \gamma(i;+)$; 
 \item $\dim(F_i \cap \widetilde{E_q}) \geq \gamma(i;-)$; and
 \item $\dim(\pi(F_i) + F_j) \leq j + \gamma(i;j)$.
\end{enumerate}
\end{corollary}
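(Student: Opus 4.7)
The plan is to let $Z_\gamma \subseteq GL_n/B$ denote the locus of flags satisfying the three inequalities, and then to prove $Y_\gamma = Z_\gamma$ by decomposing $Z_\gamma$ into $K$-orbits and applying Theorem~\ref{thm:bruhat}. First I would verify that $Z_\gamma$ is closed and $K$-stable. Conditions (1) and (2) are closed by upper semicontinuity of $F \mapsto \dim(F \cap V)$ for a fixed subspace $V$. For (3), the identity
\[
\dim(\pi(F_i) + F_j) = i + j - \dim(F_i \cap \widetilde{E_q}) - \dim(\pi(F_i) \cap F_j)
\]
(using $\ker \pi = \widetilde{E_q}$) recasts the condition as a lower bound on a sum of intersection dimensions, hence also closed. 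The subspaces $E_p$ and $\widetilde{E_q}$ and the map $\pi$ are all preserved by the block-diagonal action of $K$, so $Z_\gamma$ is $K$-stable.

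The main step is to show that on every $K$-orbit $\caO_\tau$ the three functions
\[
F_\bullet \mapsto \dim(F_i \cap E_p), \quad \dim(F_i \cap \widetilde{E_q}), \quad \dim(\pi(F_i) + F_j) - j
\]
are constant and take the values $\tau(i;+)$, $\tau(i;-)$, and $\tau(i;j)$, respectively. I would do this by computing directly on the standard flag representative $F_\bullet^\tau$ built from $\tau = c_1 \cdots c_n$: at position $i$ with $c_i = +$, take $f_i$ to be the next unused basis vector from $E_p$; at $c_i = -$, take the next unused basis vector from $\widetilde{E_q}$; and at a matched pair $c_s = c_t$ with $s < t$, take $f_s = e_{k_+} + e_{k_-}$ and $f_t = e_{k_+} - e_{k_-}$, where $e_{k_+}$ and $e_{k_-}$ are the next unused basis vectors from $E_p$ and $\widetilde{E_q}$, respectively. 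Then $F_i^\tau \cap E_p$ is spanned by one $e_{k_+}$ per $+$ sign and per completed matched pair among $c_1, \dots, c_i$, giving (1); similarly for (2).

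The hard part is the third identity. One must show that $\pi(F_i^\tau) + F_j^\tau$ equals $F_j^\tau$ enlarged by exactly one new basis direction $\pi(f_s) = e_{k_+}$ for each matched pair $c_s = c_t$ with $s \leq i < j < t$. Indeed, pairs completed by position $j$ already contribute their $e_{k_+}$ to $F_j^\tau$; matched pairs whose smaller index $s$ exceeds $i$ never enter $\pi(F_i^\tau)$; and the remaining straddling pairs each contribute linearly independently modulo $F_j^\tau$. These straddling pairs are counted by $\tau(i;j)$, giving the total dimension $j + \tau(i;j)$. Once the three identities are established, the rest is formal: a flag in $\caO_\tau$ lies in $Z_\gamma$ if and only if $\tau(i;+) \geq \gamma(i;+)$, $\tau(i;-) \geq \gamma(i;-)$, and $\tau(i;j) \leq \gamma(i;j)$ for all $i,j$, which by Theorem~\ref{thm:bruhat} is equivalent to $\tau \leq \gamma$, that is, $\caO_\tau \subseteq Y_\gamma$. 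Since $Y_\gamma$ and $Z_\gamma$ are $K$-stable subsets of $GL_n/B$ containing precisely the same orbits, $Y_\gamma = Z_\gamma$.
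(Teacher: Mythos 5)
Your proof is correct and follows essentially the same route as the paper: decompose both $Y_\gamma$ and the inequality locus into $K$-orbits, observe that the three dimension functions are constant on each orbit $\caO_\tau$ with values $\tau(i;+)$, $\tau(i;-)$, $\tau(i;j)$, and then apply Theorem~\ref{thm:bruhat} to conclude that both sets contain exactly the same orbits. The one substantive difference is that the paper simply invokes this constancy fact as Theorem~\ref{thm:orbit_description} (Yamamoto's parametrization of orbits by rank conditions), whereas you re-derive it by hand from the standard representatives; your computation is right, but the result is available by citation. Your closedness check at the start is sound as well, though strictly speaking it is not needed once you know $Y_\gamma$ and $Z_\gamma$ are both $K$-stable unions of the same set of orbits.
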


\begin{remark}\label{rmk:smirnov}
As mentioned in \cite[Remark 3.4]{Wyser-15-GD}, a special case of Theorem \ref{thm:bruhat} follows from the results of the Ph.D.
thesis of E. Smirnov \cite[Theorem 3.10]{Smirnov-Thesis}.  Translated to our combinatorial parameters, Smirnov's result 
implies that if $\gamma(i;\pm) = \tau(i;\pm)$ for all $i$, then $\gamma \leq \tau$ if and only if $\gamma(i;j) \leq 
\tau(i;j)$ for all $i,j$.
\qed
\end{remark}

\begin{remark}\label{rmk:richardson-springer-conj}
Theorem \ref{thm:bruhat} confirms a special case of \cite[Conjecture 5.6.2]{Richardson-Springer-92}.
\qed
\end{remark}

\begin{remark}
Corollary \ref{cor:orbit-closure} is useful because it is a first step in studying these $K$-orbit closures, and affine open
subsets thereof, via local equations.  Such study allows us to analyze singularities, important in representation theory,
and also allows us to understand cohomological invariants in combinatorial ways, via Gr\"{o}bner degeneration.

Indeed, despite not having yet appeared in published form, Corollary \ref{cor:orbit-closure} has already been used in such
ways in the
papers \cite{Wyser-Yong-13, Woo-Wyser-14}.  It will also be used in an article in preparation \cite{Woo-Wyser-Yong-15+} to
establish a local isomorphism between ``Mars-Springer varieties'', which are certain ``attractive slices'' of orbit
closures.

Additionally, Corollary \ref{cor:orbit-closure} is used in \cite{Wyser-15-GD} to explicitly describe certain
types of degeneracy loci parametrized by the $K$-orbit closures in this case.
\qed
\end{remark}

\begin{remark}
Theorem 1.2 makes it possible to study topological properties of the poset of clans.  In cases where the Bruhat order on
$K$-orbits is understood explicitly, and where such study has been undertaken, the posets have been shown to have favorable
combinatorial properties.  We briefly recall some of these results.

By results of Richardson-Springer \cite{Richardson-Springer-90}, any Bruhat poset of $K$-orbits is \textit{pure}, meaning
that all maximal chains in such a poset have the same length.

The set $\caI$ of involutions of the symmetric group is known to be \textit{bounded} (meaning it has a unique minimal element and
a unique maximal element), hence it is \textit{ranked} (pure and bounded).  In \cite{Incitti-04}, the rank function is
computed explicitly, and $\caI$ is additionally shown to be \textit{Eulerian} (all intervals of length at least $1$ 
have the same number of elements of odd and even rank) and \textit{EL shellable}.  (The reader
may consult e.g. \cite{Stanley-EC} for the definition of this last term.)

The set $\caI_{fpf}$ of fixed point-free involutions of the symmetric group is also known to be bounded, hence ranked.
This poset is shown in \cite{CCT-15} to also be EL shellable.

Any finite Weyl group (indexing Schubert varieties in a flag variety, which can be viewed in a certain sense
as a special case of $K$-orbits), indeed, any finite Coxeter group, is known to have a number of nice properties:  all
are bounded, ranked, \textit{thin} (meaning that each interval of length $2$ is a $4$-element ``diamond'') \cite{Bjorner-Wachs}, Eulerian \cite{Verma-71,Deodhar-77}, and
EL shellable \cite{Dyer-93}.

By contrast with some of the examples above, note that the poset of clans is not bounded, as it has numerous minimal
elements.  Even if one artifically bounds the poset by formally adjoining a minimum element, the resulting poset is ranked,
but neither
thin nor Eulerian in general, as one can see by examining Figures 1 and 2.  For instance, in Figure 1, the interval $[++-,1+1]$ is a linear
chain, while in Figure 2, the length $2$ interval $[1122,1221]$ consists of $5$ elements.

I do not know whether this poset is shellable.
\qed
\end{remark}

\begin{question}\label{q:shellable}
 Is the order complex of the Bruhat poset of clans shellable?  Is the poset CL shellable?  EL shellable?
 \qed
\end{question}

After recalling some known results in Section \ref{sec:yamamoto-stuff}, we give the proof of Theorem \ref{thm:bruhat}
and Corollary \ref{cor:orbit-closure} in Section \ref{sec:proof-of-theorem}.

\section{Describing the Bruhat order on clans}
In this section, we give the proofs of Theorem \ref{thm:bruhat} and Corollary \ref{cor:orbit-closure}.  Before doing so, we quickly recall some background on
the parametrization and the set-theoretic descriptions of the orbits themselves (\textit{not} their closures) which will
be used in the proof.

\subsection{Known results on $K$-orbits}\label{sec:yamamoto-stuff}
Our reference for the results of this section is \cite{Yamamoto-97}.

For a clan $\gamma$, recall our definitions of the numbers $\gamma(i;\pm)$ and $\gamma(i;j)$.  (We will call these
numbers the ``rank numbers of $\gamma$'' for short.)
\begin{theorem}[\cite{Yamamoto-97}]\label{thm:orbit_description}
Suppose $p+q=n$.  For a $(p,q)$-clan $\gamma$, define $Q_{\gamma}$ to be the set of all flags $F_{\bullet}$ having the following three properties for all $i,j$ ($i<j$):
\begin{enumerate}
	\item $\dim(F_i \cap E_p) = \gamma(i; +)$
	\item $\dim(F_i \cap \widetilde{E_q}) = \gamma(i; -)$
	\item $\dim(\pi(F_i) + F_j) = j + \gamma(i; j)$
\end{enumerate}

For each $(p,q)$-clan $\gamma$, $Q_{\gamma}$ is nonempty, stable under $K$, and in fact is a single $K$-orbit on $G/B$.

Conversely, every $K$-orbit on $G/B$ is of the form $Q_{\gamma}$ for some $(p,q)$-clan $\gamma$.  Hence the
association $\gamma \mapsto Q_{\gamma}$ defines a bijection between the set of all $(p,q)$-clans and the set of
$K$-orbits on $G/B$.
\end{theorem}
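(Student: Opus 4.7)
The plan is a normal-form argument combined with an induction on the length $n$. Since $K = GL(E_p) \times GL(\widetilde{E_q})$ preserves both $E_p$ and $\widetilde{E_q}$ and commutes with the projection $\pi$, each of the three quantities $\dim(F_i \cap E_p)$, $\dim(F_i \cap \widetilde{E_q})$, and $\dim(\pi(F_i) + F_j)$ is $K$-invariant. Hence every $Q_\gamma$ is automatically $K$-stable, and two flags whose rank-number profiles disagree must lie in different $K$-orbits; this already guarantees that the sets $Q_\gamma$ for distinct $\gamma$ are disjoint.

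For nonemptiness I would exhibit an explicit standard representative $F^\gamma_\bullet$ for each clan $\gamma = c_1 \hdots c_n$. Reading the clan from left to right, maintain a pointer $a$ into the standard basis of $E_p$ (initialized at $1$) and a pointer $b$ into $\widetilde{E_q}$ (initialized at $p+1$). Build vectors $v_1, \hdots, v_n$ by the rules: a $+$ at position $i$ contributes $v_i = e_a$ and advances $a$; a $-$ at position $i$ contributes $v_i = e_b$ and advances $b$; the opening of a matched pair $c_i = c_j$ with $i<j$ contributes $v_i = e_s + e_t$ for fresh indices $s \leq p < t$ read from the two pointers (which are then advanced), and the closure at position $j$ contributes $v_j = e_s - e_t$ reusing the recorded indices. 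Setting $F^\gamma_i = \mathrm{span}(v_1, \hdots, v_i)$ yields a flag for which a direct computation confirms all three dimension equalities defining $Q_\gamma$.

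The crux is to show that any flag $F_\bullet \in Q_\gamma$ is carried to $F^\gamma_\bullet$ by some $k \in K$. I would induct on $n$, examining the character $c_1$. If $c_1 = +$, the first rank condition forces $F_1 \subseteq E_p$, and $GL(E_p)$ acts transitively on lines in $E_p$; its stabilizer acts on $\C^n/F_1$ as the analogous symmetric subgroup in signature $(p-1,q)$ with clan $c_2 \hdots c_n$, and induction takes over. The case $c_1 = -$ is symmetric. If $c_1$ opens a matched pair, the rank conditions force $F_1$ to be a line with nonzero projection to both $E_p$ and $\widetilde{E_q}$; $K$ acts transitively on such lines, so one may normalize $F_1$ to $\C \cdot (e_1 + e_{p+1})$. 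The delicate bookkeeping is in this matched-pair case: the chosen lift creates a ``pending'' direction whose partner $c_j$ may appear many positions later, and one must show that the subgroup of $K$ fixing this normalization still acts transitively on the set of extensions $F_2 \subset \hdots \subset F_n$ whose rank numbers match $\gamma$. The induction can be structured to collapse $(F_{j-1}, F_j)$ once the pair is closed, reducing to a shorter clan in a smaller signature.

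To complete the bijection I would observe that every flag lies in some $Q_\gamma$. Given $F_\bullet$, the integers $a_i = \dim(F_i \cap E_p)$ and $b_i = \dim(F_i \cap \widetilde{E_q})$ are nondecreasing in $i$ with unit jumps, while the data $\dim(\pi(F_i)+F_j)-j$ controls the ``pair-count'' structure. One reconstructs a unique clan $\gamma$ by declaring $c_i$ to be $+$ when $a_i > a_{i-1}$, $-$ when $b_i > b_{i-1}$, and otherwise pairing $i$ with the unique $j$ at which the compensating jump in the pair-count function occurs; verifying the standard constraints on these integers confirms that the result is a bona fide $(p,q)$-clan. The main obstacle will be the orbit-transitivity step in the matched-pair case, where one must propagate the normalization of an unclosed coordinate through the remainder of the inductive argument without destroying the $K$-action available for the later flag steps.
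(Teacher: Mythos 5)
The paper does not prove this statement at all: it is quoted from Yamamoto \cite{Yamamoto-97} as background, and only the algorithm for producing a representative of $Q_{\gamma}$ is recalled in Section 2.1. So there is no internal proof to compare against, and your proposal has to be judged on its own terms. The easy parts are right: the $K$-invariance of the three rank functions immediately gives $K$-stability of each $Q_{\gamma}$ and pairwise disjointness of the $Q_{\gamma}$ for distinct clans, and your explicit representative (first occurrence of a pair $\mapsto e_s+e_t$, second occurrence $\mapsto e_s-e_t$ with $s\le p<t$) is exactly an instance of Yamamoto's construction, so nonemptiness reduces to a routine verification of the three equalities.

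The genuine gap is the one you flag yourself: the claim that $K$ acts transitively on $Q_{\gamma}$. Your induction is clean only when $c_1$ is a sign, where $F_1\subseteq E_p$ (or $\widetilde{E_q}$) and the stabilizer of $F_1$ maps onto the symmetric subgroup of signature $(p-1,q)$ acting on $\C^n/F_1$. When $c_1$ opens a pair, normalizing $F_1=\C\cdot(e_1+e_{p+1})$ is fine, but $\C^n/F_1$ carries no natural eigenspace splitting, the stabilizer of $F_1$ in $K$ is not the symmetric subgroup of a smaller $GL$, and the rank conditions on the later $F_i$ --- in particular condition (3), which is what distinguishes $1122$ from $1212$ from $1221$ --- do not descend to conditions of the same shape on the quotient. ``Collapsing $(F_{j-1},F_j)$ once the pair is closed'' is therefore not an induction on a smaller instance of the same problem; you would need to formulate and prove a strengthened inductive statement that tracks all pending unpaired directions, and that is precisely the nontrivial content of Yamamoto's proof. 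The surjectivity step also has two smaller defects: the rule ``$c_i=+$ when $a_i>a_{i-1}$'' should read ``$+$ when $a$ jumps and $b$ does not'' (both jump at a second occurrence of a natural number, as in the proof of Proposition 2.2 of the paper), and you assert but do not argue that for an arbitrary flag the numbers $\dim(\pi(F_i)+F_j)-j$ have the jump pattern of a genuine involution.
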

Note that in light of Theorem \ref{thm:orbit_description}, Corollary \ref{cor:orbit-closure} says that we pass from an
orbit to its closure by changing the equalities in the set-theoretic
description of the orbit to inequalities.
This is the case when one passes from a Schubert cell to the corresponding Schubert variety as well.

We next outline an algorithm, described in \cite{Yamamoto-97}, for producing a representative of $Q_{\gamma}$ given
the clan $\gamma$.

First, for each pair of matching natural numbers of $\gamma$, assign one of the numbers a ``signature" of $+$, and the
other a signature of $-$.  Now choose a permutation $\sigma$ of $1,\hdots,n$ with the following properties for all
$i=1,\hdots,n$:

\begin{enumerate}
	\item $1 \leq \sigma(i) \leq p$ if $c_i = +$ or if $c_i \in \N$ and the signature of $c_i$ is $+$.
	\item $p+1 \leq \sigma(i) \leq n$ if $c_i = -$ or if $c_i \in \N$ and the signature of $c_i$ is $-$.
\end{enumerate}

Having determined such a permutation $\sigma$, take $F_{\bullet} = \left\langle v_1, \hdots, v_n \right\rangle$ to be
the flag specified as follows:
\[ v_i = 
\begin{cases}
	e_{\sigma(i)} & \text{ if $c_i=\pm$}, \\
	e_{\sigma(i)} + e_{\sigma(j)} & \text{ if $c_i \in \N$, $c_i$ has signature $+$, and $c_i = c_j$}, \\
	-e_{\sigma(i)} + e_{\sigma(j)} & \text{ if $c_i \in \N$, $c_i$ has signature $-$, and $c_i = c_j$.}
\end{cases} \]

For example, for the orbit corresponding to the clan $+++---$, we could take $\sigma = 1$, which would give the
standard flag $\left\langle e_1,\hdots,e_6 \right\rangle$.  For $1-+1$, we could assign signatures to the $1$'s as
follows:  $1_+-+1_-$.  We could then take $\sigma$ to be the permutation $1324$.  This would give the flag 
	\[ F_{\bullet} = \left\langle e_1 + e_4, e_3, e_2, e_1-e_4 \right\rangle. \]

\subsection{The proofs of Theorem \ref{thm:bruhat} and Corollary \ref{cor:orbit-closure}}\label{sec:proof-of-theorem}
Our proof of Theorem \ref{thm:bruhat} is along the same lines as the proof of
\cite[\S10.5, Proposition 7]{Fulton-YoungTableaux}, regarding closures of type $A$ Schubert cells and the Bruhat order
on $S_n$.  We proceed as follows:

\begin{enumerate}
	\item Define a ``combinatorial Bruhat order" $\leq$ on $(p,q)$-clans, which we secretly know reflects the true geometric Bruhat order.
	\item Describe the covering relations with respect to this combinatorial Bruhat order.
	\item Show that orbit closures $Y_{\gamma}$, $Y_{\tau}$ satisfy $Y_{\gamma} \subseteq Y_{\tau}$ if and only if $\gamma \leq \tau$.
\end{enumerate}

One direction of (3) is easy.  The other requires knowledge of the covering relations with respect to the order $\leq$,
acquired in (2) above, together with some limiting arguments involving the specific representatives of the orbits
$Q_{\gamma}$ and $Q_{\tau}$ (described in Section \ref{sec:yamamoto-stuff}) when $\tau$ covers $\gamma$ in the
combinatorial order.

To see that the combinatorial order that we want to define is in fact a partial order, we first observe the following
easy fact.

\begin{prop}\label{prop:rank-nums-determine-clan}
The rank numbers $\gamma(i;+)$, $\gamma(i;-)$, and $\gamma(i;j)$ determine $\gamma$ uniquely.
\end{prop}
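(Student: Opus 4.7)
The plan is to reconstruct the clan $\gamma = c_1 \ldots c_n$ from its rank numbers in two steps: first recover the ``type'' of each $c_i$ (i.e.\ whether it is $+$, $-$, the opening half of a matched pair, or the closing half of a matched pair), and then recover the matching between openings and closings.

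For the first step, I consider the first differences
\[ \Delta_+(i) := \gamma(i;+) - \gamma(i-1;+), \qquad \Delta_-(i) := \gamma(i;-) - \gamma(i-1;-), \]
with the convention $\gamma(0;\pm) := 0$. A routine case analysis, reading the definition of $\gamma(i;\pm)$ one symbol at a time, should show that the ordered pair $(\Delta_+(i), \Delta_-(i))$ equals $(1,0)$, $(0,1)$, $(0,0)$, or $(1,1)$ according as $c_i$ is a $+$, a $-$, the opening half of a matched pair, or the closing half, respectively. Hence the sign rank numbers alone determine the type of every position, and in particular single out all opening and closing positions.

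For the second step, given any closing position $t$, I recover its matched opening $s$ from the rank numbers $\gamma(i;j)$. Extending the notation by $\gamma(i;n) := 0$, one checks directly from the definition that for $1 \leq i < t$,
\[ \gamma(i; t-1) - \gamma(i; t) = \#\{ (s',t') : s' \leq i, \ t' = t \}, \]
which equals $1$ precisely when $i \geq s$, and $0$ otherwise. Therefore $s$ is determined as the smallest $i$ for which this difference equals $1$, and running this over all closing positions recovers the entire matching. Combined with the type sequence from the first step, this uniquely reconstructs $\gamma$ (up to the inessential relabeling of the natural numbers used as pair-labels), proving the proposition. The only real obstacle is keeping the four-way case analysis and the boundary conventions straight; no ideas beyond unwinding the definitions are needed.
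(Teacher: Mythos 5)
Your first step matches the paper's exactly: reading off the type of each $c_i$ from the pair of first differences $(\Delta_+(i),\Delta_-(i))$ is precisely the ``jump'' argument in the paper's proof, and your four-way case analysis is correct.

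Your second step is a genuinely different (and arguably cleaner) route to the matching. The paper proceeds left to right through the closing positions, and at each closing $k$ it compares $\gamma(i_l;k)$ against $l$ for the yet-unmatched openings $i_1<\cdots<i_m$ preceding $k$, finding the first $l$ where $\gamma(i_l;k)$ drops below $l$. Your approach instead isolates the single pair $(s,t)$ via the difference $\gamma(i;t-1)-\gamma(i;t)$, which directly counts pairs with $s'\le i$ and $t'=t$; this works for each closing independently, with no bookkeeping of which openings have already been matched. Both arguments are correct in spirit, but yours packages the information more transparently.

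However, there is a boundary gap you haven't patched. The difference $\gamma(i;t-1)-\gamma(i;t)$ only makes sense when both rank numbers are defined, i.e.\ when $i<t-1$, so the range is $1\le i\le t-2$, not $1\le i<t$ as you wrote. Your stated convention $\gamma(i;n):=0$ handles the case $t=n$ (and is in fact already forced by the definition), but it does nothing for the undefined quantity $\gamma(t-1;t-1)$. If $s=t-1$ (the opening is immediately adjacent to its closing, as in the clan $1122$ at the pair in positions $1,2$), the difference is $0$ for every $i\le t-2$, so ``the smallest $i$ for which this difference equals $1$'' does not exist, and your rule as stated fails to produce $s$. Moreover, the identity you assert cannot be rescued for $i=t-1$ by adopting the natural convention $\gamma(i;j):=0$ for $i\ge j$, since that would give $\gamma(t-1;t-1)-\gamma(t-1;t)=-\gamma(t-1;t)\le 0$, whereas the right-hand side is $1$. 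The fix is simple: take $s$ to be the smallest $i\in[1,t-2]$ with difference $1$ if one exists, and declare $s=t-1$ otherwise (which is consistent, since step one already identifies $t-1$ as an opening in that case). With that repair your argument is complete.
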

\begin{proof}
Say that $\gamma=c_1 \hdots c_n$ ``has a $+$ jump at $i$" if $\gamma(i;+) = \gamma(i-1;+) + 1$.  Likewise, say that
$\gamma$ ``has a $-$ jump at $i$" if $\gamma(i;-) = \gamma(i-1;-) + 1$.  From the definitions, it is clear that
$\gamma$ has a $+$ jump at $i$ if and only if $c_i$ is a $+$ or the second occurrence of a natural number, and does
\textit{not} have a $+$ jump at $i$ if and only if $c_i$ is a $-$ or the first occurrence of a natural number.
Likewise, $\gamma$ has a $-$ jump at $i$ if and only if $c_i$ is a $-$ or the second occurrence of a natural number,
and does \textit{not} have a $-$ jump at $i$ if and only if $c_i$ is a $+$ or the first occurrence of a natural
number.

Thus the numbers $\gamma(i;+)$ and $\gamma(i;-)$, by themselves, completely determine the location of $+$'s, $-$'s,
first occurrences of natural numbers, and second occurrences of natural numbers.  The only choice left in constructing
$\gamma$, then, is when we see the second occurrence of a natural number, \textit{which} natural number is it the
second occurrence \textit{of}?  This is determined by the numbers $\gamma(i;j)$.  Let $k$ be the first index at which
$\gamma$ has the second occurrence of a natural number.  Supposing there is only one first occurrence to the left of
position $k$, $c_k$ is determined.  So suppose there is more than one first occurrence to the left of position $k$,
with $i_1,\hdots,i_m$ the indices less than $k$ at which $\gamma$ has first occurrences.  Consider the numbers
$\gamma(i_1; k), \gamma(i_2; k), \hdots, \gamma(i_m; k)$.  From the definitions, it is clear that for all
$l=1,\hdots,m$, $\gamma(i_l; k) \leq l$, and also that $\gamma(i_m; k) = m-1$.  Thus there is some first $l$ at which
$\gamma(i_l;k) < l$.  Then $c_k$ must be the second occurrence of $c_{i_l}$.  Indeed, $c_k$ cannot be the second
occurrence of any $c_{i_j}$ with $j < l$, because if it were, the pair $(c_{i_j},c_k)$ would not be counted in the
number $\gamma(i_j; k)$, so we would necessarily have $\gamma(i_j; k) < j$.  On the other hand, if the second
occurrence of $c_{i_l}$ were beyond position $k$, then we would necessarily have that $\gamma(i_l; k) = l$.  Thus
$c_k$ is determined by the numbers $\gamma(i_1; k), \hdots, \gamma(i_m; k)$.

Working in order from left to right, the remaining indices at which $\gamma$ has second occurrences can be filled in
using similar logic.  At each such index, we consider the indices left of $k$ at which $\gamma$ has the first
occurrence of a natural number \textit{which does not yet have a mate}.  Applying the same argument as above will
allow us to determine which of those first occurrences $c_k$ should be the mate of.
\end{proof}

\begin{example}
As an example of the preceding proof, suppose that we are told that the $(3,3)$ clan $\gamma$ has rank numbers
\begin{itemize}
	\item $\gamma(i;+) = 0,0,1,2,2,3$ for $i=1,2,3,4,5,6$;
	\item $\gamma(i;-) = 0,0,1,2,2,3$ for $i=1,2,3,4,5,6$;
	\item $\gamma(i;j) = 1,1,0,0,0,1,0,0,0,0,0,0,0,0,0$ for $(i,j) = (1,2),(1,3),(1,4),(1,5),(1,6),\newline (2,3),(2,4),(2,5),(2,6),(3,4),(3,5),(3,6),(4,5),(4,6),(5,6)$.
\end{itemize}

The first two sets of numbers tell us that $\gamma$ has the pattern $FFSSFS$, where $F$ represents the first
occurrence of a natural number, and $S$ represents the second occurrence of a natural number.  We can assign natural
numbers to the positions of the $F$'s any way we'd like --- they may as well be $1,2,3$, in order.  Thus $\gamma$ has
the form $12SS3S$.  Looking at the first $S$ in position $3$, we need to determine whether $c_3 = 2$ or $c_3 = 1$.
Looking at the numbers $\gamma(1;3) = 1$ and $\gamma(2;3) = 1$, we see that $c_3 = 2$.  This forces $c_4 = 1$, and
$c_6 = 3$.  Thus $\gamma = 122133$.
\end{example}

We now define combinatorial Bruhat order in the obvious way.
\begin{definition}
We define the \textbf{combinatorial Bruhat order} $\leq$ on $(p,q)$-clans as follows:  $\gamma \leq \tau$ if and only 
\begin{enumerate}
	\item $\gamma(i; +) \geq \tau(i;+)$ for all $i$;
	\item $\gamma(i; -) \geq \tau(i;-)$ for all $i$;
	\item $\gamma(i; j) \leq \tau(i; j)$ for all $i<j$.
\end{enumerate}
\end{definition}

In light of Proposition \ref{prop:rank-nums-determine-clan}, it is clear that $\leq$ is a partial order on
$(p,q)$-clans.  Our first task is to describe the covering relations with respect to this order.  Recall that clans can
be thought of as involutions in $S_n$ with signed fixed points.  Conversely, any involution in $S_n$ can likewise be
encoded by a character string, as we do for clans, but with the character string consisting of pairs of
matching natural numbers and (say) dots, rather than signs, with the dots marking fixed points.  (So, for example,
the involution $563412 = (1,5)(2,6)$ corresponds to the character string
$12 \cdot \cdot 12$.)  We first observe that when involutions of $S_n$ are translated into such
``clan-like symbols,'' their Bruhat order (by which we mean the restriction of the Bruhat order on $S_n$ to the subset
of involutions) is closely related to the combinatorial Bruhat order on clans defined above.

Paralleling the notation for clans, given an involution $\gamma \in S_n$, with $\gamma_1 \hdots \gamma_n$ the
corresponding character string just described, define the following for any $i =1,\hdots,n$ and for any
$1 \leq s < t \leq n$:
\begin{enumerate}
	\item $\gamma(i; \cdot) = $ Number of dots plus twice the number of pairs of equal natural numbers \newline
occurring among $\gamma_1 \hdots \gamma_i$; and
	\item $\gamma(s;t) = \#\{\gamma_a = \gamma_b \in \N \mid a \leq s, b > t\}$.
\end{enumerate}

We omit the easy proof of the following proposition, which is simply a reformulation of the Bruhat order on
involutions which more closely mirrors our definition of the combinatorial Bruhat order on clans.
\begin{prop}\label{prop:bruhat-involutions}
For involutions $\gamma,\tau \in S_n$, $\gamma \leq \tau$ in Bruhat order if and only if
\begin{enumerate}
	\item $\gamma(i; \cdot) \geq \tau(i; \cdot)$ for $i=1,\hdots,n$; and
	\item $\gamma(s;t) \leq \tau(s;t)$ for all $1 \leq s < t \leq n$.
\end{enumerate}
\end{prop}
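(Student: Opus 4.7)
The plan is to reduce the proposition to the standard characterization of the Bruhat order on $S_n$ by rank numbers (equation \eqref{eq:bruhat}), exploiting the extra symmetry available for involutions. Let $r_\gamma(i,j)$ denote the usual rank function on $S_n$, so that the Bruhat order is controlled by the family of inequalities between $r_\gamma(i,j)$ and $r_\tau(i,j)$ as $(i,j)$ ranges over all pairs.

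The first step is to translate each combinatorial statistic into rank numbers. A direct count on the character string $\gamma_1 \hdots \gamma_n$ shows that $\gamma(i;\cdot) = r_\gamma(i,i)$: a dot at a position $a \leq i$ and both positions of a matched pair $(a,b)$ with $a < b \leq i$ account precisely for those $k \leq i$ with $\gamma(k) \leq i$, while pairs straddling $i$ contribute to neither quantity. Similarly, for $s < t$, the matched pairs counted by $\gamma(s;t)$ are in bijection with the indices $k \leq s$ satisfying $\gamma(k) > t$ (each such $k$ is necessarily the smaller element of its pair), which gives $\gamma(s;t) = s - r_\gamma(s,t)$. Hence condition (1) is exactly the rank inequality on the diagonal $\{(i,i)\}$, and condition (2) is exactly the rank inequality on the strict upper triangle $\{(s,t):s<t\}$.

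The second step---the only one requiring a real idea---is the observation that because $\gamma$ is an involution its permutation matrix is symmetric, so the rank matrix satisfies $r_\gamma(i,j) = r_\gamma(j,i)$ for all $i,j$. Consequently, to verify the full system of Bruhat-order rank inequalities it suffices to verify them on the closed upper triangle $i \leq j$.

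Putting everything together, conditions (1) and (2) are jointly equivalent to the full rank-number characterization of the Bruhat order on $S_n$: the ``only if'' direction is immediate by specialization, and the ``if'' direction uses the symmetry of the rank matrix to extend the upper-triangular inequalities to all pairs $(i,j)$. I do not expect a substantive obstacle; the only delicate point is aligning conventions (the sign of the inequality and the precise definition of $r_\gamma$) between the rank characterization of Bruhat order and the statistics appearing in the proposition, which is resolved by the bookkeeping in the first step.
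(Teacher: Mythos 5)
Your proof is correct, and since the paper omits the proof of this proposition (remarking only that it is ``easy''), you have supplied exactly the sort of argument the author intends. The two ingredients---the identities $\gamma(i;\cdot)=r_\gamma(i,i)$ and $\gamma(s;t)=s-r_\gamma(s,t)$, together with the observation that an involution's rank matrix is symmetric, $r_\gamma(i,j)=r_\gamma(j,i)$, so that checking the closed upper triangle is enough---are the whole content, and your verifications of each are accurate. One small caution on conventions, since you did flag it: the paper's displayed equation~(1.1) defines $r_u(i,j)=\#\{k\leq i: u(k)\geq j\}$ and asserts $u\leq v \Leftrightarrow r_u\geq r_v$, but these two choices are not compatible (already in $S_2$ one has $r_e(1,2)=0<1=r_{s_1}(1,2)$ despite $e<s_1$). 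Your computations implicitly use the consistent version $r_\gamma(i,j)=\#\{k\leq i: \gamma(k)\leq j\}$ with $\gamma\leq\tau \Leftrightarrow r_\gamma\geq r_\tau$, which is the right one and is what makes the bookkeeping in your first step come out correctly; it would be worth stating that choice explicitly rather than pointing at~(1.1) as written.
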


\begin{corollary}\label{cor:bruhat-involutions}
Suppose $\gamma$ and $\tau$ are two $(p,q)$-clans.  Then $\gamma \leq \tau$ in the combinatorial Bruhat order if and
only if
\begin{enumerate}
	\item The underlying involutions of $\gamma$, $\tau$ are related in the Bruhat order on $S_n$, and additionally
	\item $\gamma(i; +) \geq \tau(i;+)$ and $\gamma(i;-) \geq \tau(i;-)$ for all $i$.
\end{enumerate}
\end{corollary}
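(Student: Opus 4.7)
The plan is to reduce the statement to Proposition \ref{prop:bruhat-involutions} by carefully translating between the rank numbers of a clan and the rank numbers of its underlying involution. Since the two Bruhat-style posets are defined in parallel ways, I expect the corollary to fall out of a routine bookkeeping argument once the correct dictionary between the two sets of rank numbers is established.

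First I would observe that if one forgets the signs on the fixed points of a clan $\gamma$ and writes dots in their place, one obtains exactly the character string of the underlying involution. From this it is immediate that
\[
\gamma(i;\cdot)_{\text{inv}} \;=\; \gamma(i;+)_{\text{clan}} + \gamma(i;-)_{\text{clan}},
\]
because each $+$ and each $-$ in positions $1,\ldots,i$ contributes a single dot to the involution count, while each complete pair of matching natural numbers in positions $1,\ldots,i$ contributes to both $\gamma(i;+)$ and $\gamma(i;-)$ once and contributes twice to the involution's dot-plus-double-pair count. Similarly, the ``crossing pair'' counts agree on the nose: $\gamma(s;t)_{\text{inv}} = \gamma(s;t)_{\text{clan}}$, since the underlying pairings are literally the same and the conditions $a \leq s < t < b$ match.

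With the dictionary in hand, both directions of the corollary are formal. For the forward direction, assume $\gamma \leq \tau$ in the combinatorial Bruhat order on clans. Condition (2) of the corollary is part of the definition of $\leq$, and summing the two sign-rank inequalities gives $\gamma(i;\cdot)_{\text{inv}} \geq \tau(i;\cdot)_{\text{inv}}$; combined with $\gamma(s;t)_{\text{inv}} = \gamma(s;t)_{\text{clan}} \leq \tau(s;t)_{\text{clan}} = \tau(s;t)_{\text{inv}}$, Proposition \ref{prop:bruhat-involutions} yields condition (1). For the reverse direction, assume (1) and (2). From (1) and Proposition \ref{prop:bruhat-involutions}, we get $\gamma(s;t)_{\text{inv}} \leq \tau(s;t)_{\text{inv}}$, which translates to $\gamma(i;j)_{\text{clan}} \leq \tau(i;j)_{\text{clan}}$ for all $i<j$; together with (2), all three defining inequalities of the combinatorial Bruhat order on clans hold, so $\gamma \leq \tau$.

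There is no real obstacle, the only thing requiring care is verifying the first identity
$\gamma(i;\cdot)_{\text{inv}} = \gamma(i;+)_{\text{clan}} + \gamma(i;-)_{\text{clan}}$,
which relies on the factor of $2$ in the involution's definition being exactly compensated by the fact that each complete pair of naturals contributes once to $\gamma(i;+)_{\text{clan}}$ and once to $\gamma(i;-)_{\text{clan}}$. Once that is noted, the corollary is a direct consequence of Proposition \ref{prop:bruhat-involutions} and can be recorded in a few lines.
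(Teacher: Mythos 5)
Your proof is correct and matches the paper's argument essentially exactly: both rely on the identity $\gamma(i;\cdot) = \gamma(i;+) + \gamma(i;-)$, note (implicitly or explicitly) that the crossing-pair counts of a clan and its underlying involution coincide, and then invoke Proposition~\ref{prop:bruhat-involutions} in both directions. The only difference is that you spell out the dictionary a bit more explicitly than the paper does; the underlying logic is the same.
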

\begin{proof}
Clearly, $\gamma(i;+) + \gamma(i;-) = \gamma(i;\cdot)$, and $\tau(i;+) + \tau(i;-) = \tau(i; \cdot)$.  So if
$\gamma \leq \tau$ in the combinatorial Bruhat order, then $\gamma(i;+) \geq \tau(i;+)$ and $\gamma(i;-) \geq
\tau(i;-)$, so $\gamma(i; \cdot) \geq \tau(i; \cdot)$.  Thus the underlying involutions are related in the Bruhat
order by Proposition \ref{prop:bruhat-involutions}.

On the other hand, relation of the underlying involutions in the Bruhat order implies that $\gamma(i;j) \leq \tau(i;j)$
for all $i<j$ by Proposition \ref{prop:bruhat-involutions}, so insisting on the conditions $\gamma(i;+) \geq \tau(i;+)$
and $\gamma(i;-) \geq \tau(i;-)$ guarantees that $\gamma \leq \tau$ in the combinatorial order as well.
\end{proof}

We now characterize the covering relations with respect to the combinatorial Bruhat order, using Corollary
\ref{cor:bruhat-involutions} to simplify some of the arguments.  Indeed, by use of Corollary \ref{cor:bruhat-involutions},
in certain instances we are able to use the results of \cite{Incitti-04} regarding the covering relations in the
Bruhat order on ordinary involutions of $S_n$.

\begin{theorem}\label{thm:covers-combinatorial-order}
	Suppose that $\gamma$, $\tau$ are $(p,q)$-clans, with $\gamma < \tau$.  Then there exists a clan $\gamma'$ such that $\gamma < \gamma' \leq \tau$, and such that $\gamma'$ is obtained from $\gamma$ by a ``move" of one of the following types:
	\begin{enumerate}
		\item Replace a pattern of the form $+-$ by $11$, i.e. replace a plus and minus by a pair of matching natural numbers.
		\item Replace a pattern of the form $-+$ by $11$.
		\item Replace a pattern of the form $11+$ by $1+1$, i.e. interchange a number and a $+$ sign to its right if in doing so you move the number farther from its mate.
		\item Replace a pattern of the form $11-$ by $1-1$.
		\item Replace a pattern of the form $+11$ by $1+1$.
		\item Replace a pattern of the form $-11$ by $1-1$.
		\item Replace a pattern of the form $1122$ by $1212$.
		\item Replace a pattern of the form $1122$ by $1+-1$.
		\item Replace a pattern of the form $1122$ by $1-+1$.
		\item Replace a pattern of the form $1212$ by $1221$.
	\end{enumerate}
\end{theorem}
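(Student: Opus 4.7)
The plan is to reduce the question to Incitti's classification of covering relations in the Bruhat order on the involutions of $S_n$, then handle the $\pm$ decorations as a finite compatibility check.

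First, via Corollary \ref{cor:bruhat-involutions}, $\gamma < \tau$ forces $\hat{\gamma} < \hat{\tau}$ strictly in the Bruhat order on $\caI_n$. Indeed, if $\hat{\gamma} = \hat{\tau}$, then $\gamma(i;\cdot) = \tau(i;\cdot)$ for all $i$, so combining this with $\gamma(i;\pm) \geq \tau(i;\pm)$ and the identity $\gamma(i;+) + \gamma(i;-) = \gamma(i;\cdot)$ yields $\gamma(i;\pm) = \tau(i;\pm)$, whence $\gamma = \tau$ by Proposition \ref{prop:rank-nums-determine-clan}. By Incitti's classification \cite{Incitti-04}, we therefore have an involution $\hat{\gamma}' \in \caI_n$ covering $\hat{\gamma}$ with $\hat{\gamma}' \leq \hat{\tau}$, and $\hat{\gamma}'$ differs from $\hat{\gamma}$ by one of a small number of local rearrangements at a few positions of the character string.

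Next I would match each Incitti cover type to one or more of the ten clan moves. A cover that creates a new pair from two fixed points is realized by moves (1) or (2) when the two fixed points carry opposite signs, and by moves (8) or (9) when the new pair must nest around an intervening pair which is thereby broken into two fixed points of opposite sign. A cover that shifts a pair past a neighboring fixed point translates directly to moves (3)--(6), with the $\pm$ determined by the sign of that fixed point in $\gamma$. The two types of cover that rearrange two neighboring pairs translate to moves (7) and (10). I would carry out this correspondence explicitly, Incitti type by type.

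The main obstacle is choosing $\pm$ decorations on $\gamma'$ so that $\gamma' \leq \tau$. The pair-rank inequalities $\gamma'(i;j) \leq \tau(i;j)$ are automatic from $\hat{\gamma}' \leq \hat{\tau}$ and Proposition \ref{prop:bruhat-involutions}, so only the sign inequalities $\gamma'(i;\pm) \geq \tau(i;\pm)$ remain. For moves (3)--(7) and (10) the signs on $\gamma'$ are inherited from $\gamma$ verbatim, so there is nothing to check. For the pair-creation moves (1), (2), (8), (9) there is a binary choice between dual forms (for instance $+- \to 11$ versus $-+ \to 11$, and $1122 \to 1+-1$ versus $1122 \to 1-+1$). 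I would argue this choice is forced by comparing the relevant local sign-rank numbers of $\tau$ against those of $\gamma$: in each configuration the hypothesis $\gamma \leq \tau$ rules out at most one of the dual moves, leaving the other as a valid lift of the Incitti cover. Verifying this in each of the finitely many sub-cases completes the proof.
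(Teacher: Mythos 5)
Your reduction to Incitti's covering moves is a natural idea, and in fact the paper does invoke Incitti's results in several places, but your proposal as written has a genuine gap: the claim that for moves (3)--(7) and (10) ``the signs on $\gamma'$ are inherited from $\gamma$ verbatim, so there is nothing to check'' is false except for move (10). Although the sign \emph{characters} are simply permuted, the rank numbers $\gamma'(s;\pm)$ are \emph{not} unchanged, because the positions where pairs close also move. For instance, for move (3), $11+\to 1+1$ at positions $j<k$, one has $\gamma'(s;-)=\gamma(s;-)-1$ for all $s\in[j,k-1]$, so $\gamma'\le\tau$ forces the nontrivial condition $\gamma(s;-)>\tau(s;-)$ on that whole range. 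The paper addresses this head-on and shows that this condition \emph{can fail} even when $\gamma<\tau$; in that situation the would-be lift of the Incitti cover is simply not $\le\tau$, and one must instead perform a different clan move, such as $11-\to 1-1$ or $1122\to 1-+1$, which the paper explicitly notes is \emph{not} Incitti's minimal covering transformation on the underlying involutions. So ``lift the minimal covering transformation and done'' is not a valid strategy.

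A second, related issue is that not every Incitti cover of $\hat\gamma$ is realizable as a clan move at all. The signature constraint forces the number of $+$'s minus the number of $-$'s to stay equal to $p-q$; in particular, a cover that merges two fixed points of the same sign into a pair changes the signature and corresponds to no clan move. Your plan does not rule out the possibility that every involution cover $\hat\gamma'$ with $\hat\gamma<\hat\gamma'\le\hat\tau$ is of a type that fails to lift (either because of the signature constraint or because of the sign-rank inequalities above), so the existence of a liftable cover has to be argued, not assumed. The paper's proof does exactly this by working from the first position of difference $f$ and choosing the move adaptively based on the local configuration of $\gamma$ and $\tau$ --- which is why its case analysis is substantially more intricate than the one you sketch.
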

\begin{proof}
We prove this by explicitly constructing $\gamma'$ from $\gamma$ and $\tau$, then showing that the $\gamma'$ so
constructed satisfies $\gamma < \gamma' \leq \tau$.  (This is the clan version of what is done in \cite[\S 10.5,
Lemma 11]{Fulton-YoungTableaux} in the case of permutations.)

In fact, when one translates the clans $\gamma$, $\gamma'$, and $\tau$ to their underlying involutions, one sees that
the construction of $\gamma'$ follows very closely the ``covering moves" for involutions described in
\cite[\S 3-4]{Incitti-04}.  In fact, we are able to apply directly the results of \cite{Incitti-04} regarding these
covering moves in certain cases.

The proof is by case analysis.  As the statement of the theorem may indicate, there are a number of cases to consider.
First, we say that clans $\gamma=\gamma_1 \hdots \gamma_n$ and $\tau=\tau_1 \hdots \tau_n$ differ at position $i$ if
and only if one of the following holds:
\begin{itemize}
	\item $\gamma_i$ and $\tau_i$ are opposite signs;
	\item $\gamma_i$ is a sign and $\tau_i$ is a number;
	\item $\gamma_i$ is a number and $\tau_i$ is a sign; or
	\item $\gamma_i$ and $\tau_i$ are numbers whose mates are in different positions. 
\end{itemize}

If $\gamma < \tau$, then obviously there is a first position at which $\gamma$ and $\tau$ differ.  Denote this
position by $f$.  One of the following must hold:
\begin{enumerate}
	\item $\gamma_f$ is a $+$ sign, and $\tau_f$ is the first occurrence of some natural number;
	\item $\gamma_f$ is a $-$ sign, and $\tau_f$ is the first occurrence of some natural number; or
	\item $\gamma_f$ and $\tau_f$ are both first occurrences of a natural number, with $\gamma_f = \gamma_i$ and
$\tau_f = \tau_j$ for $f < i < j$.
\end{enumerate}

To see this, note first that neither $\gamma_f$ nor $\tau_f$ can be the second occurrence of a natural number, since
if either was, a difference would've been detected prior to position $f$, namely at the position of the first
occurrence of that natural number.  Thus the only possibilities for $(\gamma_f,\tau_f)$ are $(+,-)$, $(-,+)$, $(+,F)$,
$(-,F)$, $(F,+)$, $(F,-)$, $(F,F)$, where here `F' stands for the first occurrence of a natural number.  We can rule
out the cases $(+,-)$, $(-,+)$, $(F,+)$, and $(F,-)$, since in those cases we would not have $\gamma < \tau$ in the
combinatorial Bruhat order.  Indeed,
\begin{itemize}
	\item $(\gamma_f,\tau_f) = (+,-) \Rightarrow \gamma(f; -) < \tau(f; -)$;
	\item $(\gamma_f,\tau_f) = (-,+) \Rightarrow \gamma(f; +) < \tau(f; +)$;
	\item $(\gamma_f,\tau_f) = (F,+) \Rightarrow \gamma(f; +) < \tau(f; +)$;
	\item $(\gamma_f,\tau_f) = (F,-) \Rightarrow\gamma(f; -) < \tau(f; -)$.
\end{itemize}

Note also that in the case $(F,F)$, if we had $\gamma_f = \gamma_i$ and $\tau_f = \tau_j$ with $j < i$, then this
would imply that $\gamma(f;j) > \tau(f;j)$, which is again contrary to our assumption that $\gamma < \tau$.  Thus in
the case $(F,F)$, the mate for $\tau_f$ must occur to the \textit{right} of the mate for $\gamma_f$.

We remark that the position we are denoting $f$ (for ``first") is the same as the ``difference index" $di$ defined in
\cite[Definition 4.1]{Incitti-04}.  In fact, there is no loss in generality in assuming that $f=1$, and we do so in
what follows.  This essentially allows us to ignore, over certain ranges of indices, any second occurrences of
natural numbers whose first occurrences are prior to position $f$.  Since $\gamma$ and $\tau$ match in all such
positions anyway, considering them does nothing but clutter our arguments unnecessarily.  So from this point forward,
we assume $f=1$.

We now consider the cases $(\gamma_1,\tau_1) = (+,F)$, $(-,F)$, and $(F,F)$ in turn.

\begin{case}[$(\gamma_1,\tau_1) = (+,F)$]
Suppose that $\gamma_1 = +$, and that $\tau_1$ is the first occurrence of a natural number whose second occurrence is
at position $i$:  $\tau_1 = \tau_i \in \mathbb{N}$.  We claim first that there exists $j \in [1,i]$ such that either
$\gamma_j$ is a $-$ sign, or $\gamma_j$ is the first of a pair of matching natural numbers whose second occurence is
in position at most $i$, i.e. $\gamma_j = \gamma_k \in \N$ $1 < j < k \leq i$.  To see this, note that $\tau(1;-) =
\gamma(1;-) = 0$, and that $\tau$ has a $-$ jump at position $i$, $\tau_i$ being the second occurrence of $\tau_1$.
Thus in order to ensure $\gamma(i;-) \geq \tau(i;-)$, $\gamma$ must have a $-$ jump in the range $[1,i]$.  This can
occur only at either a $-$ sign, or the second occurrence of a natural number.

Let $j \in [1,i]$ be the smallest index such that $\gamma_j$ is either a $-$ or the first occurrence of a natural number whose second occurrence is in position at most $i$, with $\gamma_j = \gamma_k \in \N$ for $1 < j < k \leq i$.  We consider the two subcases separately, but first we prove the following lemma, which is of use in both instances.

\begin{lemma}\label{lem:1.A}
For any $l \in [1,j-1]$, let $\gamma_F(l)$ denote the number of first occurrences of natural numbers among $\gamma_1 \hdots \gamma_l$, and define $\tau_F(l)$ similarly.  Then $\gamma_F(l) < \tau_F(l)$.
\end{lemma}
\begin{proof}
Note that, by our choice of $j$, all first occurrences for $\gamma$ in the range $[1,l]$ have their second occurrence strictly after position $i$.  Since $\gamma(1;i) \leq \tau(1;i)$, $\tau$ must have at least as many first occurrences in this range whose second occurrences are strictly after position $i$.  And $\tau$ has (at least) one more first occurrence, namely $\tau_1$, whose second occurrence is \textit{at} position $i$.
\end{proof}

\begin{subcase}[$\gamma_j = -$]
This case looks like the following:
\[
	\begin{array}{rccccc}
		&&&&i& \\
		\tau: & 1 & \hdots & \hdots & 1 & \hdots \\
		\gamma: & + & \hdots & - & \hdots & \hdots \\
		&&&j&&
	\end{array}
\]

In this case, we claim that the clan $\gamma'$ obtained from $\gamma$ by replacing $\gamma_1$ and $\gamma_j$ by a new pair of matching natural numbers (i.e. replacing the pattern $(\gamma_f,\gamma_j) = +-$ by $11$) satisfies $\gamma < \gamma' \leq \tau$.  To see that $\gamma < \gamma'$, we simply note how the rank numbers for $\gamma'$ differ from those of $\gamma$.  From the definitions, it is clear that the only changes in the rank numbers are
\begin{itemize}
	\item $\gamma'(k;+) = \gamma(k;+) - 1$ for all $k \in [1,j-1]$;
	\item $\gamma'(k;l) = \gamma(k;l) + 1$ for all $k<l$ with $1 \leq k < l < j$.
\end{itemize}

Thus $\gamma < \gamma'$.  To see that $\gamma' \leq \tau$, having noted how the rank numbers for $\gamma'$ differ from those of $\gamma$, and knowing that $\gamma < \tau$, we need only establish the following:
\begin{itemize}
	\item $\tau(k;+) < \gamma(k;+)$ for all $k \in [1,j-1]$; and
	\item $\tau(k;l) > \gamma(k;l)$ for all $k<l$ with $1 \leq k < l < j$.
\end{itemize}

We start with the first statement.  Since $\gamma(1;+) = 1$ and $\tau(1;+) = 0$, it is clear for $k=1$, and so we consider $k \in [2,j-1]$.  Consider the following observations:
\begin{enumerate}
	\item By our choice of $j$, $\gamma$ has no $-$ jumps in the range $[2,j-1]$.
	\item $\gamma(1;-) = \tau(1;-) = 0$.  Thus, by the previous bullet, and because $\gamma < \tau$, we must have that $\tau(k;-) = \gamma(k;-)$ for all $k \in [2,j-1]$.  As a consequence, note that there can be no $-$ jumps for $\tau$ in the range $[2,j-1]$, i.e. there can be no $k \in [2,j-1]$ where $\tau_k = -$, or where $\tau_k$ is a second occurrence of a natural number.
	\item Thus in the range $[2,j-1]$, the only possible characters for both $\tau$ and $\gamma$ are $+$'s and first occurrences of natural numbers.
\end{enumerate}

Since the number of first occurrences for $\gamma$ in the range $[1,k]$ is strictly less than the number of first occurrences for $\tau$ in the same range by Lemma \ref{lem:1.A}, the number of $+$ signs for $\gamma$ must be strictly greater than the number of $+$ signs for $\tau$ in this range.  Thus $\gamma(k;+) > \tau(k;+)$, as desired.

Now, we must see that $\tau(k;l) > \gamma(k;l)$ whenever $1 \leq k < l < j$.  Let such a $k,l$ be given.

By the above observations, it is clear that any first occurrence of a natural number for $\gamma$ occurring in the range $[1,k]$ has its mate in position strictly greater than $i$ (and hence strictly greater than $l$, since $l < j \leq i$), while any first occurrence of a natural number for $\tau$ occurring in this range has its mate in position at least $j$ (hence also in a position strictly greater than $l$).  Thus $\gamma(k;l)$ and $\tau(k;l)$ are simply the numbers of first occurrences for $\gamma$ and $\tau$, respectively, occurring in the range $[1,k]$.  So again it follows from Lemma \ref{lem:1.A} that $\gamma(k;l) < \tau(k;l)$.

This completes the proof that $\gamma < \gamma' \leq \tau$.
\end{subcase}

\begin{subcase}[$\gamma_j = \gamma_k \in \mathbb{N}$, $1 < j < k \leq i$]
Here, we have the following setup:
\[
	\begin{array}{rccccccc}
		&&&&&&i& \\
		\tau: & 1 & \hdots & \hdots & \hdots & \hdots & 1 & \hdots \\
		\gamma: & + & \hdots & 1 & \hdots & 1 & \hdots & \hdots \\
		&&&j&&k&
	\end{array}
\]

In this case, we claim that the $\gamma'$ obtained from $\gamma$ by interchanging $\gamma_1$ and $\gamma_j$ (replacing the substring $(\gamma_1,\gamma_j,\gamma_k) = +11$ by  $1+1$) satisfies $\gamma < \gamma' \leq \tau$.

To see that $\gamma < \gamma'$, simply note how the rank numbers for $\gamma'$ and $\gamma$ differ.  The only differences are
\begin{itemize}
	\item $\gamma'(l;+) = \gamma(l;+) - 1$ for $l=1,\hdots,j-1$; and 
	\item $\gamma'(l;m) = \gamma(l;m) + 1$ for pairs $l<m$ such that $1 \leq l < j$ and $l < m < k$.
\end{itemize}

This shows that $\gamma < \gamma'$.  To see that $\gamma' \leq \tau$, one shows that
\begin{itemize}
	\item $\tau(l;+) < \gamma(l;+)$ for $l=1,\hdots,j-1$; and
	\item $\tau(l;m) > \gamma(l;m)$ whenever $1 \leq l < j$ and $l<m<k$.
\end{itemize}

The proofs of these facts are exactly the same as those given in the previous subcase, so we do not repeat them here.
\end{subcase}
\end{case}

\begin{case}[$(\gamma_f,\tau_f) = (-,F)$]
In this case, we are able to make the exact same arguments as in the previous case, interchanging $-$ and $+$ signs everywhere.  The upshot is that $\gamma'$ can be obtained from $\gamma$ by a move of either type $-+ \rightarrow 11$ or $-11 \rightarrow 1-1$.
\end{case}

\begin{case}[$(\gamma_1,\tau_1) = (F,F)$]
Now, suppose that $(\gamma_1,\tau_1) = (F,F)$, with $\gamma_1$ and $\tau_1$ natural numbers such that $\gamma_1 = \gamma_j \in \mathbb{N}$ and $\tau_1 = \gamma_i$.  Recall that $j < i$.

There are multiple subcases to consider here.  To see what they are, we first note the following easy lemma.

\begin{lemma}\label{lem:3.B}
	One of the following must be true:
	\begin{enumerate}
		\item There exists a pair of matching numbers $\gamma_k = \gamma_l \in \N$ with $1 < k < j < l \leq i$.
		\item There exists a pair of natural numbers $\gamma_k = \gamma_l \in \N$ with $j < k < l \leq i$.
		\item There exists a $\gamma_k = \pm$ for some $k \in [j+1,i]$.
		\item There exists a pair of natural numbers $\gamma_k = \gamma_l \in \N$ with $l > i$, and with $k \in [j+1,i]$.
	\end{enumerate}
\end{lemma}
\begin{proof}
	Indeed, consider the possible values of $\gamma_{j+1},\hdots,\gamma_i$.  If any is a $\pm$, we are done, as we are in case (3).  If any is a second occurrence, then its first occurrence must occur either between $1$ and $j-1$ (case (1)), or else after $j$ (case (2)).  And if any is a first occurrence, then its second occurrence must occur either in position at most $i$ (case (2)), or in a position strictly beyond $i$ (case (4)).
\end{proof}

We consider each of the above cases in turn.

\begin{subcase}[There exist $\gamma_k = \gamma_l \in \N$ with $1 < k < j < l \leq i$.]\label{sc:1212}
The picture here is as follows:
\[
	\begin{array}{rccccccccc}
		&&&&&&&&i& \\
		\tau: & 1 & \hdots & \hdots & \hdots & \hdots & \hdots & \hdots & 1 & \hdots \\
		\gamma: & 1 & \hdots & 2 & \hdots & 1 & \hdots & 2 & \hdots & \hdots \\
		&&&k&&j&&l&&
	\end{array}
\]

Then choose the pair such that $k$ is minimal.  Note that the numbers $(\gamma_1,\gamma_k,\gamma_j,\gamma_l)$ form the pattern $1212$.  Let $\gamma'$ be obtained from $\gamma$ by changing this pattern to $1221$ (i.e. by either interchanging $\gamma_1$ and $\gamma_k$, or $\gamma_j$ and $\gamma_l$).  Then we claim that $\gamma < \gamma' \leq \tau$.

To see that $\gamma < \gamma'$, we simply note that the only changes in the rank numbers as we move from $\gamma$ to $\gamma'$ are that $\gamma'(s;t) = \gamma(s;t) + 1$ whenever $1 \leq s < k$ and $j \leq t < l$.  To see that $\gamma' \leq \tau$, then, we simply need to see that $\gamma(s;t) < \tau(s;t)$ for such $s$ and $t$.

In fact, by Corollary \ref{cor:bruhat-involutions}, it suffices here to observe that the underlying involutions of $\gamma'$ and $\tau$ are suitably related in Bruhat order.  That they are follows immediately from \cite[Corollary 4.6]{Incitti-04}.  Indeed, one checks that, on the underlying involutions, the move made here is the ``minimal covering transformation" of $\gamma$ relative to $\tau$, with $(1,k)$ being a ``non-crossing $ee$-rise".
\end{subcase}

\begin{subcase}[There is either a $+$, a $-$, or a pair of natural numbers occurring in the range $j+1,\hdots,i$.]\label{sc:1122}
In this case, choose $k$ to be the smallest index in the range $[j+1,i]$ such that either $\gamma_k = +$, $\gamma_k = -$, or $\gamma_k = \gamma_l \in \N$ with $k < l \leq i$.  We treat each of the three possibilities in turn, starting with the last.

\begin{subsubcase}[$\gamma_k = \gamma_l \in \N$ with $j < k < l \leq i$.]
We have the following picture:
\[
	\begin{array}{rccccccccc}
		& & & & & & & & i & \\
		\tau: & 1 & \hdots & \hdots & \hdots & \hdots & \hdots & \hdots & 1 & \hdots \\
		\gamma: & 1 & \hdots & 1 & \hdots & 2 & \hdots & 2 & \hdots & \hdots \\
		 & & & j & & k & & l & & 
	\end{array}
\]
	Note that the numbers $(\gamma_1,\gamma_j,\gamma_k,\gamma_l)$ form the pattern $1122$.  We claim that the $\gamma'$ obtained from $\gamma$ by changing this pattern either to $1+-1$ or $1-+1$ satisfies $\gamma < \gamma' \leq \tau$.  To see this, we first establish a few basic lemmas which will be used both in this subcase and the next.
	
	Recall the following notation, defined in a previous case:
	\[ \gamma_F(s) := \#\{t \in [1,s] \mid \gamma_t \text{ is a first occurrence}\}, \]
and
\[ \tau_F(s) := \#\{t \in [1,s] \mid \tau_t \text{ is a first occurrence}\}. \]
	\begin{lemma}\label{lem:3.2.A}
For any $s \in [1,k-1]$, 
\[ \gamma_F(s) \leq \tau_F(s). \]
\end{lemma}
\begin{proof}
Since we are not in Case \ref{sc:1212}, any first occurrence in the range $[2,j-1]$ has its mate in a position strictly beyond $i$.  By our choice of $k$, all first occurrences in the range $[j+1,k-1]$ also have their second occurrences in a position strictly beyond $i$.  Thus $\gamma_F(s) = \gamma(s;i) + 1$, the additional $1$ being $\gamma_1$, whose second occurrence is $\gamma_j$.  Likewise, $\tau(s;i) \leq \tau_F(s) + 1$, the additional $1$ being $\tau_1$, whose second occurrence is $\tau_i$.  Since $\gamma_F(s) + 1 = \gamma(s;i) \leq \tau(s;i) \leq \tau_F(s) + 1$, we get the desired inequality.
\end{proof}

\begin{lemma}\label{lem:3.2.B}
For any $s \in [j,k-1]$, if $\gamma(s;-) = \tau(s;-)$, then $\gamma(s;+) > \tau(s;+)$, and if $\gamma(s;+) = \tau(s;+)$, then $\gamma(s;-) > \gamma(s;-)$.
\end{lemma}
\begin{proof}
Since $\gamma(s;\pm) \geq \tau(s;\pm)$ by definition of the combinatorial Bruhat order, the statement here amounts to the fact that we cannot have both $\gamma(s;-) = \tau(s;-)$ and $\gamma(s;+) = \tau(s;+)$ at the same time at any point over this range of indices.

Consider the possible values of characters $\gamma_t$ for $t \in [2,s]$.  They are
\begin{itemize}
	\item $+$ signs;
	\item $-$ signs;
	\item First occurrences of natural numbers whose second occurrences are beyond position $s$;
	\item Pairs of numbers each occurring in the range $[2,s]$;
	\item The lone character $\gamma_j$ (the second occurrence of $\gamma_1$).
\end{itemize}

Consider the possible values of characters $\tau_t$ for $t \in [2,s]$.  They are
\begin{itemize}
	\item $+$ signs;
	\item $-$ signs;
	\item First occurrences of natural numbers whose second occurrences are beyond position $s$;
	\item Pairs of numbers each occurring in the range $[2,s]$.
\end{itemize}

We define the following notations:
\begin{itemize}
	\item $\gamma_+(s) = \#\{t \in [2,s] \mid \gamma_t = +\}$;
	\item $\gamma_-(s) = \#\{t \in [2,s] \mid \gamma_t = -\}$;
	\item $\gamma_F(s) = \#\{t \in [2,s] \mid \gamma_t = \gamma_r \in \N, r > s\}$;
	\item $\gamma_P(s) = \#\{ \text{Pairs of natural numbers occurring in the range $[2,s]$}\}$.
\end{itemize}

We use similar notations for $\tau$.

The following observations are evident:
\begin{itemize}
	\item $\gamma(s;+) = \gamma_+(s) + \gamma_P(s) + 1$;
	\item $\gamma(s;-) = \gamma_-(s) + \gamma_P(s) + 1$;
	\item $\tau(s;+) = \tau_+(s) + \tau_P(s)$;
	\item $\tau(s;-) = \tau_-(s) + \tau_P(s)$;
	\item $s - 1 = \gamma_+(s) + \gamma_-(s) + \gamma_F(s) + 2 \cdot \gamma_P(s) + 1$;
	\item $s - 1 = \tau_+(s) + \tau_-(s) + \tau_F(s) + 2 \cdot \tau_P(s)$.
\end{itemize}

(The various ``$+\ 1$"'s in the equations involving $\gamma$ come from counting $\gamma_j$, the second occurrence of $\gamma_1$.)

Combining all of the above observations, we get
\[ \gamma(s;+) + \gamma(s;-) + \gamma_F(s) - 1 = \tau(s;+) + \tau(s;-) + \tau_F(s). \]

If $\gamma(s;-) = \tau(s;-)$ and $\gamma(s;+) = \tau(s;+)$, this reduces to
\[ \gamma_F(s) = \tau_F(s) + 1, \]
which contradicts Lemma \ref{lem:3.2.A}.  Thus we cannot have both $\gamma(s;-) = \tau(s;-)$ and $\gamma(s;+) = \tau(s;+)$.
\end{proof}

\begin{lemma}\label{lem:3.2.C}
Either $\gamma(s;+) > \tau(s;+)$ for all $s \in [j,k-1]$, or $\gamma(s;-) > \tau(s;-)$ for all $s \in [j,k-1]$ (or both).
\end{lemma}
\begin{proof}
If $\gamma(s;+) > \tau(s;+)$ for all $s \in [j,k-1]$, we are done.  Otherwise, there is some first index $s \in [j,k-1]$ for which $\gamma(s;+) = \tau(s;+)$.  By Lemma \ref{lem:3.2.B}, we must have $\gamma(s;-) > \tau(s;-)$.  Since there are no $+$ jumps for $\gamma$ in the range $s+1,\hdots,k-1$ (by the fact that we are not in Case \ref{sc:1212}, and by our choice of $k$), the equality $\gamma(t;+) = \tau(t;+)$ must hold for all $t = s+1,\hdots,k-1$ as well, and thus, again by Lemma \ref{lem:3.2.B}, we have $\gamma(t;-) > \tau(t;-)$ for all $t$ in this range as well.  Since there are no $-$ jumps for $\gamma$ in the range $[j+1,s-1]$ (again, because we are not in Case \ref{sc:1212} and by our choice of $k$), the strict inequality $\gamma(t;-) > \tau(t;-)$ must hold also for all $t \in [j,s-1]$.  Indeed, assuming by induction that strict inequality holds at position $t$, we have
\[ \gamma(t-1;-) = \gamma(t;-) > \tau(t;-) \geq \tau(t-1;-). \]
\end{proof}

Now, using Lemma \ref{lem:3.2.C}, we are able to show that the $\gamma'$ obtained from $\gamma$ by replacing $(\gamma_1,\gamma_j,\gamma_k,\gamma_l) = 1122$ by either $1+-1$ or $1-+1$ satisfies $\gamma < \gamma' \leq \tau$.  If $\gamma(s;+) > \tau(s;+)$ for all $s \in [j,k-1]$, then we replace the pattern by $1-+1$, and if $\gamma(s;-) > \tau(s;-)$ for all $s \in [j,k-1]$, we replace the pattern by $1+-1$.  (If both of these are true, we can make either move.)

Assume that $\gamma(s;+) > \tau(s;+)$ for all $s \in [j,k-1]$.  Then $\gamma'$ is obtained via the move $1122 \rightarrow 1-+1$.  Note how the rank numbers of $\gamma'$ differ from those of $\gamma$:

\begin{itemize}
	\item $\gamma'(s;+) = \gamma(s;+) - 1$ for all $s \in [j,k-1]$;
	\item $\gamma'(s;t) = \gamma(s;t) + 1$ whenever $s<t$, $f \leq s < k$, and $j \leq t < l$.
\end{itemize}

This shows that $\gamma < \gamma'$.  To see that $\gamma' \leq \tau$, we must show that
\begin{itemize}
	\item $\gamma(s;-) > \tau(s;-)$ for all $s \in [j,k-1]$; and
	\item $\gamma(s;t) < \tau(s;t)$ whenever $s<t$, $f \leq s < k$, and $j \leq t < l$.
\end{itemize}

The first of these two items has already been assumed.  The second follows from Corollary \ref{cor:bruhat-involutions} and \cite[Corollary 4.6]{Incitti-04}.  Indeed, on the level of the underlying involutions, the move from $\gamma$ to $\gamma'$ is the minimal covering transformation of $\gamma$ relative to $\tau$, with $(1,k)$ being a ``crossing $ee$-rise".

Now, if it is not the case that $\gamma(s;+) > \tau(s;+)$ for all $s \in [j,k-1]$, then by Lemma \ref{lem:3.2.C}, it \textit{is} the case that $\gamma(s;-) > \tau(s;-)$ for all $s \in [j,k-1]$.  Thus we instead make the move $1122 \rightarrow 1+-1$, and repeat the above argument with signs reversed.
\end{subsubcase}

\begin{subsubcase}[$\gamma_k = +$.]
We have the following picture:
\[
\begin{array}{rcccccccc}
		& & & & & & & i & \\
		\tau: & 1 & \hdots & \hdots & \hdots & \hdots & \hdots & 1 & \hdots \\
		\gamma: & 1 & \hdots & 1 & \hdots & + & \hdots & \hdots & \hdots \\
		 & & & j & & k & & & 
	\end{array}
\]

Note that $(\gamma_1,\gamma_j,\gamma_k)$ form the pattern $11+$.  We would like to obtain $\gamma'$ from $\gamma$ by converting this pattern to $1+1$.  Alas, this does not always work.  The $\gamma'$ so obtained satisfies $\gamma < \gamma' \leq \tau$ in some cases, and does not in other cases.

To see this, note how the rank numbers for $\gamma$ differ from those of $\gamma'$:
\begin{itemize}
	\item $\gamma'(s;-) = \gamma(s;-) - 1$ for $s = j,\hdots,k-1$.
	\item $\gamma'(s;t) = \gamma(s;t) + 1$ for $s<t$ with $1 \leq s$, $j \leq t < k$.
\end{itemize}

Thus it is always the case that $\gamma < \gamma'$.  However, it is only true that $\gamma' \leq \tau$ if $\gamma(s;-) > \tau(s;-)$ for all $s \in [j,k-1]$.  If this holds, then we do indeed have that $\gamma' \leq \tau$.  (Again, the inequalities $\gamma'(s;t) \leq \tau(s;t)$ follow from Corollary \ref{cor:bruhat-involutions} and \cite[Corollary 4.6]{Incitti-04}, since, on the level of the underlying involutions, the move $11+ \rightarrow 1+1$ is the minimal covering transformation of $\gamma$ relative to $\tau$, with $(1,k)$ being an ``$ef$-rise".)

We need not have $\gamma(s;-) > \tau(s;-)$ for all $s \in [j,k-1]$, however.  In the event that we do not, then we claim that one of the following must hold:
\begin{enumerate}
	\item There exists $l \in [k+1,i]$ with $\gamma_l = -$, or
	\item There exists a pair of matching natural numbers $\gamma_l = \gamma_m \in \N$ with $k < l < m \leq i$.
\end{enumerate}

To see this, note that there is some $s \in [j,k-1]$ for which $\gamma(s;-) = \tau(s;-)$, and since $\gamma$ has no $-$ jumps in the range $[s,k]$ (by our choice of $k$), we also have that $\gamma(t;-) = \tau(t;-)$ for $t = s+1,\hdots,k$.  Now, $\tau$ has a $-$ jump at position $i$, since $\tau_i$ is the second occurrence of $\tau_1$.  Thus to ensure that $\gamma(i;-) \geq \tau(i;-)$, $\gamma$ must have a $-$ jump somewhere in the range $[k+1,i]$.  This can occur either with a $-$ sign, or with the second occurrence of a natural number.  If it occurs at the second occurrence of a natural number, the first occurrence of that number cannot be in the range $[2,j-1]$, since we are not in Case \ref{sc:1212}, and it cannot be in the range $[j+1,k]$, by our choice of $k$.  Thus the first occurrence of that number must be beyond position $k$.

Now, choose $l$ to be the smallest index in the range $[k+1,i]$ such that either $\gamma_l = -$ or $\gamma_l = \gamma_m \in \N$ with $k < l < m \leq i$.

\begin{subsubsubcase}[$\gamma_l = -$]
The picture is 
\[
\begin{array}{rccccccccc}
		& & & & & & & & i & \\
		\tau: & 1 & \hdots & \hdots & \hdots & \hdots & \hdots & \hdots & 1 & \hdots \\
		\gamma: & 1 & \hdots & 1 & \hdots & + & \hdots & - & \hdots & \hdots \\
		 & & & j & & k & & l & &
	\end{array}
\]

Then $(\gamma_1,\gamma_j,\gamma_l)$ form the pattern $11-$, and we claim that the $\gamma'$ obtained from $\gamma$ by converting this pattern to $1-1$ satisfies $\gamma < \gamma' \leq \tau$.  Note how the rank numbers for $\gamma$ and $\gamma'$ are related:

\begin{itemize}
	\item $\gamma'(s;+) = \gamma(s;+) - 1$ for $s = j,\hdots,l-1$.
	\item $\gamma'(s;t) = \gamma(s;t) + 1$ for $f \leq s$, $j \leq t < l$.
\end{itemize}

Thus $\gamma < \gamma'$.  To see that $\gamma' \leq \tau$, we need to see that 
\begin{enumerate}
	\item $\gamma(s;+) > \tau(s;+)$ for $s=j,\hdots,l-1$, and 
	\item $\gamma(s;t) < \tau(s;t)$ for $s < t$ with $1 \leq s$ and $j \leq t < l$.
\end{enumerate}

For the first, note that we have already assumed that $\gamma(s;-) = \tau(s;-)$ for some $s \in [j+1,k]$, so by Lemma \ref{lem:3.2.C}, $\gamma(s;+) > \tau(s;+)$ for all $s \in [j,k-1]$.  Since $\gamma_k = +$, $\gamma(k;+) > \tau(k;+)$ as well.  Now, since $\gamma(s;-) = \tau(s;-)$ and since there are no $-$ jumps in the range $[s,l]$ (by our choices of both $k$ and $l$), the equality $\gamma(t;-) = \tau(t;-)$ is maintained for all $t \in [s,l-1]$.  Then by Lemma \ref{lem:3.2.B}, we have $\gamma(t;+) > \tau(t;+)$ in all of these positions as well.  (Note that Lemma \ref{lem:3.2.B} refers to indices in the range $[j,k-1]$, but in fact this restriction on the indices is not necessary.  Indeed, the upper bound of the interval for which Lemma \ref{lem:3.2.B} holds can be taken to be any index less than $i$.)

Unlike several of our other cases, here the inequality $\gamma(s;t) < \tau(s;t)$ does not follow immediately from the results of \cite{Incitti-04} since the move being described here is not a ``minimal covering transformation".  Thus we must argue it directly here.  So let $s<t$ be given with $1 \leq s$ and $j \leq t < l$.  By our choice of $j$, $k$, $l$, etc., and by virtue of the case that we are currently in, it is clear that all first occurrences in the range $[1,s]$ either have their mate in a position at most $j$, or else strictly beyond $i$.  So since $t \geq j$, we have
\[ \gamma(s;t) = \#\{\gamma_a = \gamma_b \in \N \mid a \leq s, b > t\} = \#\{\gamma_a = \gamma_b \in \N \mid a \leq s, b > i\} = \gamma(s;i). \]

As for $\tau(s;t)$, since $t < l \leq i$, we have 
\[ \tau(s;t) = \#\{\tau_a = \tau_b \in \N \mid a \leq s, b > t\} > \#\{\tau_a = \tau_b \in \N \mid a \leq s, b > i\} = \tau(s;i), \]
since the pair $\tau_1 = \tau_i$ contributes to $\tau(s;t)$, but not to $\tau(s;i)$.  Since
\[ \gamma(s;t) = \gamma(s;i) \leq \tau(s;i) < \tau(s;t), \]
we are done.
\end{subsubsubcase}

\begin{subsubsubcase}[$\gamma_l = \gamma_m$ for $k < l < m \leq i$]
Here, the picture is
\[
\begin{array}{rccccccccccc}
		& & & & & & & & & & i & \\
		\tau: & 1 & \hdots & \hdots & \hdots & \hdots & \hdots & \hdots & \hdots & \hdots & 1 & \hdots \\
		\gamma: & 1 & \hdots & 1 & \hdots & + & \hdots & 2 & \hdots & 2 & \hdots & \hdots \\
		 & & & j & & k & & l & & m & 
	\end{array}
\]
Here, we claim that the $\gamma'$ obtained from $\gamma$ by changing the pattern $(\gamma_1,\gamma_j,\gamma_l,\gamma_m) = 1122$ to $1-+1$ satisfies $\gamma < \gamma' \leq \tau$.  Indeed, the arguments for this are identical to those given in the previous subcase.  Recall how the rank numbers for $\gamma$ and $\gamma'$ differ:

\begin{itemize}
	\item $\gamma'(s;+) = \gamma(s;+) - 1$ for all $s \in [j,l-1]$;
	\item $\gamma'(s;t) = \gamma(s;t) + 1$ whenever $s<t$, $1 \leq s < l$, and $j \leq t < m$.
\end{itemize}

Thus $\gamma < \gamma'$, and to see that $\gamma' \leq \tau$, we simply need to see that $\gamma(s;+) > \tau(s;+)$ for $s \in [j,l-1]$, and that $\gamma(s;t) < \tau(s;t)$ for $s<t$, $1 \leq s < l$, and $j \leq t < m$.  As mentioned, the arguments for these facts are identical to those given in the previous subcase, where $\gamma_l$ was equal to $-$.  Since they are identical, we do not repeat them here.
\end{subsubsubcase}

In conclusion, the result of all the subcases considered here can be stated succinctly as follows:  A suitable $\gamma'$ can be obtained from $\gamma$ through one of the following moves:  $11+ \rightarrow 1+1$, $11- \rightarrow 1-1$, or $1122 \rightarrow 1-+1$.
\end{subsubcase}

\begin{subsubcase}[$\gamma_k = -$.]
Here, we repeat the arguments of the previous subcase, reversing all signs.  The upshot is that $\gamma'$ can be obtained from $\gamma$ by one of the moves $11- \rightarrow 1-1$, $11+ \rightarrow 1+1$, or $1122 \rightarrow 1+-1$.
\end{subsubcase}
\end{subcase}

\begin{subcase}[None of the previous cases apply.]
Then by Lemma \ref{lem:3.B}, there exists a pair of matching natural numbers $\gamma_k = \gamma_l \in \N$ with $k \in [j+1,i]$ and $l > i$.  Choose the pair such that $l$ is minimal --- that is, choose $l$ to be the smallest index greater than $i$ which is the second occurrence of a natural number whose first occurrence is between indices $j+1$ and $i$.  We have the following schematic:
\[
\begin{array}{rcccccccccc}
		& & & & & & & i & \\
		\tau: & 1 & \hdots & \hdots & \hdots & \hdots & \hdots & 1 & \hdots & \hdots & \hdots\\
		\gamma: & 1 & \hdots & 1 & \hdots & 2 & \hdots & \hdots & \hdots & 2 & \hdots \\
		 & & & j & & k & & & & l & 
	\end{array}
\]

Note that the indices $(\gamma_1,\gamma_j,\gamma_k,\gamma_l)$ form the pattern $1122$.  We claim that the $\gamma'$ obtained from $\gamma$ by converting this pattern to $1212$ (i.e. by interchanging $\gamma_j$ and $\gamma_k$) satisfies $\gamma < \gamma' \leq \tau$.  To see that $\gamma < \gamma'$, note how the rank numbers for $\gamma$ and $\gamma'$ are related.  The differences are:

\begin{itemize}
	\item $\gamma'(s;+) = \gamma(s;+) - 1$ for $s = j,\hdots,k-1$;
	\item $\gamma'(s;-) = \gamma(s;-) - 1$ for $s=j,\hdots,k-1$;
	\item $\gamma'(s;t) = \gamma(s;t) + 1$ for $f \leq s < j \leq t < k$;
	\item $\gamma'(s;t) = \gamma(s;t) + 1$ for $j \leq s < k \leq t < l$;
	\item $\gamma'(s;t) = \gamma(s;t) + 2$ for $j \leq s < t < k$.
\end{itemize}

This shows that $\gamma < \gamma'$.  To see that $\gamma' \leq \tau$, we must show that 
\begin{itemize}
	\item $\gamma(s;\pm) > \tau(s;\pm)$ for $s = j,\hdots,k-1$;
	\item $\gamma(s;t) > \tau(s;t)$ for $f \leq s < j \leq t < k$;
	\item $\gamma(s;t) < \tau(s;t)$ for $j \leq s < k \leq t < l$;
	\item $\gamma(s;t) +1 < \tau(s;t)$ for $j \leq s < t < k$.
\end{itemize}

The last three items above follow from Corollary \ref{cor:bruhat-involutions} and \cite[Corollary 4.6]{Incitti-04}.  Indeed, on the level of underlying involutions, the move from $\gamma$ to $\gamma'$ is the minimal covering transformation of $\gamma$ relative to $\tau$, with $(1,l)$ being an ``$ed$-rise".

So we concern ourselves only with the first item.  Note that, by virtue of the case we currently find ourselves in, $\gamma$ has no $+$ or $-$ jumps in the range $j+1,\hdots,i$.  Since $\tau$ \textit{does} have both a $+$ and $-$ jump at position $i$ ($\tau_i$ being the second occurrence of $\tau_1$), and since we must have $\tau(i;\pm) \leq \gamma(i;\pm)$, we see that we must have $\gamma(s;\pm) > \tau(s;\pm)$ for all $s$ in the range $j+1,\hdots,i-1$.  Since $k \leq i$, in particular we have that $\gamma(s;\pm) > \tau(s;\pm)$ for $s = j+1,\hdots,k-1$.  Since $\gamma$ has both a $+$ and a $-$ jump at position $j$ ($\gamma_j$ being the second occurrence of $\gamma_f$), we must have that $\gamma(j;\pm) > \tau(j;\pm)$ as well.
\end{subcase}
\end{case}

This completes the proof of Theorem \ref{thm:covers-combinatorial-order}.
\end{proof}

With Theorem \ref{thm:covers-combinatorial-order} in hand, we can now prove Theorem \ref{thm:bruhat}.

\begin{proof}[Proof of Theorem \ref{thm:bruhat}]
Let $\gamma$,
$\tau$ be $(p,q)$-clans, with $Q_{\gamma}$, $Q_{\tau}$ the corresponding $K$-orbits, and $Y_{\gamma}$, $Y_{\tau}$ the
corresponding $K$-orbit closures.  We prove the first statement of Theorem \ref{thm:bruhat}, namely that $Y_{\gamma}
\subseteq Y_{\tau}$ if and only if $\gamma \leq \tau$.

We show first that $\gamma \nleq \tau \Rightarrow Y_{\gamma} \not\subset Y_{\tau}$.  Let $\mathcal{Y}_{\tau}$ be the
purported orbit closure described in Theorem \ref{thm:bruhat}, defined by inequalities determined by the rank
numbers of $\tau$.  $\mathcal{Y}_{\tau}$ is a closed subvariety of $G/B$, being defined locally by the vanishing of
certain minors.  Indeed, conditions (1) and (2) of the description of $\mathcal{Y}_{\tau}$ amount to the vanishing of
lower-left $i \times q$ minors and upper-left $i \times p$ minors, respectively, of a generic matrix whose
non-specialized entries give affine coordinates on a translated big cell.  Condition (3) amounts to the vanishing of
certain minors of a matrix whose first $i$ columns are the first $i$ columns of this generic matrix with the
lower-left $i \times p$ submatrix zeroed out, and whose last $j$ columns are the first $j$ columns of the generic
matrix.  (These conditions are explained more precisely in \cite{Wyser-Yong-13}.)  Since $Q_{\tau} \subset \mathcal{Y}_{\tau}$ by Theorem \ref{thm:orbit_description}, we clearly have that
$Y_{\tau} \subseteq \mathcal{Y}_{\tau}$.  Thus to show that $Y_{\gamma} \not\subset Y_{\tau}$, it suffices to show
that $Q_{\gamma} \cap \mathcal{Y}_{\tau} = \emptyset$.  This is clear from the definitions, since if
$\gamma \nleq \tau$, there exists some $i$ such that $\gamma(i;+) < \tau(i;+)$, or $\gamma(i;-) < \tau(i;-)$, or some
$i<j$ such that $\gamma(i;j) > \tau(i;j)$.  Since any point of $Q_{\gamma}$ must meet the description of Theorem
\ref{thm:orbit_description}, it cannot possibly lie in $\mathcal{Y}_{\tau}$.

The preceding argument establishes that $Y_{\gamma} \subseteq Y_{\tau} \Rightarrow \gamma \leq \tau$.  We now consider
the converse.  Suppose that $\gamma < \tau$.  By induction, it suffices to consider the cases where $\tau$ covers $\gamma$, so we
may assume that $\gamma$ and $\tau$ are related by a ``move" of the type described in Theorem
\ref{thm:covers-combinatorial-order}.  For each of those possible moves, we give the following sort of argument:  We
take representatives $F_{\bullet} \in Q_{\tau}$ and $E_{\bullet} \in Q_{\gamma}$, provided by the algorithm of
\cite{Yamamoto-97} described in Subsection \ref{sec:yamamoto-stuff}.  We also take, for each $t \in \C^*$, a matrix
$k(t) \in K$, so that the flag $F_{\bullet}(t) := k(t) \cdot F_{\bullet}$ is in $Q_{\tau}$ for all $t \in \C^*$.  We
then show that $\lim_{t \rightarrow 0} F_{\bullet}(t) = E_{\bullet}$.  This shows that $E_{\bullet}$ is a limit point
for $Q_{\tau}$, so that it is an element of $Y_{\tau}$.  Now, any other point $P_{\bullet} \in Q_{\gamma}$ is of the
form $P_{\bullet} = k \cdot E_{\bullet}$ for some $k \in K$.  Moreover, the curve $k \cdot F_{\bullet}(t)$ is
contained in $Q_{\tau}$, and tends to $P_{\bullet}$ as $t \rightarrow 0$.  So this argument establishes that in fact
$Q_{\gamma} \subseteq Y_{\tau}$, which implies that $Y_{\gamma} \subseteq Y_{\tau}$.

For ease of notation, when writing the flags $E_{\bullet} = \left\langle v_1,\hdots,v_n \right\rangle$ and $F_{\bullet} = \left\langle w_1,\hdots,w_n \right\rangle$, we may indicate only those $v_i$ and $w_i$ which differ.  For instance, if $E_{\bullet} = \left\langle e_1, e_2, e_3, e_4, e_5 \right\rangle$ and $F_{\bullet} = \left\langle e_1, e_2+e_3, e_3, e_4, e_5 \right\rangle$, then for short we will write $E_{\bullet} = \left\langle e_2 \right\rangle$ and $F_{\bullet} = \left\langle e_2 + e_3 \right\rangle$.
\setcounter{case}{0}
\begin{case}[$+- \rightarrow 11$]
Suppose that the $+$ and $-$ are in positions $i < j$.  We may choose the representatives $E_{\bullet} = \left\langle v_1,\hdots,v_n \right\rangle$ and $F_{\bullet} = \left\langle w_1,\hdots,w_n \right\rangle$ so that $v_i = e_1$, $v_j = e_n$, $w_i = e_1 + e_n$, $w_j = e_n$, and $v_l = w_l$ for all remaining $l$.  Now, for $t \in \C^*$, let $k(t) \in K$ be the diagonal matrix
\[ k(t) = \text{diag}(1/t,1,1,\hdots,1). \]
Then 
\[ F_{\bullet}(t) := k(t) \cdot F_{\bullet} = \left\langle (1/t) e_1 + e_n \right\rangle = \left\langle e_1 + te_n \right\rangle. \] 
(To obtain the last equality, we have simply scaled the $i$th basis vector for $F_{\bullet}(t)$ by a factor of $t$.)  From this, it is clear that $\lim_{t \rightarrow 0} F_{\bullet}(t) = E_{\bullet}$.
\end{case}
\begin{case}[$-+ \rightarrow 11$]
This case is extremely similar to the previous case.  We omit the details.
\end{case}
\begin{case}[$11+ \rightarrow 1+1$]
Suppose the $11+$ and $1+1$ occur in positions $i<j<k$.  We may choose $E_{\bullet} = \left\langle v_1,\hdots,v_n \right\rangle$ so that $v_i = e_1 + e_n$, $v_j = e_1$, and $v_k = e_2$.  We write
\[ E_{\bullet} = \left\langle e_1 + e_n, e_1, e_2 \right\rangle = \left\langle e_1+e_n, e_1, 2e_1 + e_2 + e_n \right\rangle, \]
where we have simply replaced $v_k$ by $v_i + v_j + v_k$, which does not change the point $E_{\bullet}$.

Likewise, we may choose $F_{\bullet} = \left\langle w_1,\hdots,w_n \right\rangle$ so that $w_i = e_1 + e_n$, $w_j = e_2$, and $w_k = e_1$.  Then
\[ F_{\bullet} = \left\langle e_1+e_n,e_2,e_1 \right\rangle = \left\langle e_1 + e_n, e_2, 2e_1 + e_2 + e_n \right\rangle. \]

Finally, choose $k(t) \in K$ to be the matrix with $1$'s on the diagonal, $1/t$ in entry $(1,2)$, and $0$'s elsewhere.  Then 
\[ F_{\bullet}(t) = k(t) \cdot F_{\bullet} = \left\langle e_1 + e_n, (1/t) e_1 + e_2, (2+(1/t)) e_1 + e_2 + e_n \right\rangle =^* \]
\[ \left\langle e_1 + e_n, (1/t) e_1 + e_2, 2e_1 + e_2 + e_n \right\rangle = \]
\[ \left\langle e_1 + e_n, e_1 + te_2, 2e_1 + e_2 + e_n \right\rangle. \]

(Note that to obtain the equality (*), we simply replace $(2+(1/t))e_1 + e_2 + e_n$ by 
\[ (1-(1/t)) ((2+(1/t)) e_1 + e_2 + e_n) + (1/t) ((1/t) e_1 + e_2) + (1/t) (e_1 + e_n), \]
which does not change the flag $F_{\bullet}(t)$.)

By the last description of $F_{\bullet}(t)$ given above, we see that $E_{\bullet} = \lim_{t \rightarrow 0} F_{\bullet}(t)$.
\end{case}
\begin{case}[$+11 \rightarrow 1+1$]
Suppose that $+11$ and $1+1$ occur in positions $i<j<k$.  We may choose $E_{\bullet} = \left\langle v_1,\hdots,v_n \right\rangle$ so that $v_i = e_1$, $v_j = e_2+e_n$, and $v_k = e_n$.  We write
\[ E_{\bullet} = \left\langle e_1, e_2+e_n, e_n \right\rangle = \left\langle e_1, e_1 + e_2 + e_n, e_n \right\rangle. \]

We may choose $F_{\bullet} = \left\langle w_1,\hdots,w_n \right\rangle$ so that $w_i = e_1 + e_n$, $w_j = e_2$, and $w_k = e_n$.  Then
\[ F_{\bullet} = \left\langle e_1+e_n,e_2,e_n \right\rangle = \left\langle e_1 + e_n, e_1 + e_2 + e_n, e_n \right\rangle. \]

Finally, choose $k(t) \in K$ to be the matrix with $1/t$ in entry $(1,1)$, $1$'s on all of the other diagonal entries, $1-(1/t)$ in entry $(1,2)$, and $0$'s elsewhere.  Then
\[ F_{\bullet}(t) = \left\langle (1/t) e_1 + e_n, e_1 + e_2 + e_n, e_n \right\rangle = \left\langle e_1 + te_n, e_1 + e_2 + e_n, e_n \right\rangle, \]
from which it is clear that $E_{\bullet} = \lim_{t \rightarrow 0} F_{\bullet}$.

\end{case}
\begin{case}[$11- \rightarrow 1-1$, $-11 \rightarrow 1-1$]
These cases are extremely similar to the previous two cases, so we omit the details.
\end{case}
\begin{case}[$1122 \rightarrow 1212$]
Suppose that the $1122$ and $1212$ patterns occur in positions $i < j < k < l$.  We choose the flag $E_{\bullet} = \left\langle v_1,\hdots,v_n \right\rangle$ so that $v_i = e_1 + e_{n-1}$, $v_j = e_{n-1}$, $v_k = e_2 + e_n$, and $v_l = e_n$.  We choose the flag $F_{\bullet} = \left\langle w_1,\hdots,w_n \right\rangle$ so that $w_i = e_1 + e_{n-1}$, $w_j = e_2 + e_n$, $w_k = e_{n-1}$, and $w_l = e_n$.  Let $k(t) \in K$ be the matrix with $1$'s on the diagonal, $1/t$ in position $(n-1,n)$, and $0$'s elsewhere.  Then
\[ F_{\bullet}(t) = \left\langle e_1 + e_{n-1}, e_2 + (1/t) e_{n-1} + e_n, e_{n-1}, (1/t) e_{n-1} + e_n \right\rangle =^* \]
\[ \left\langle e_1 + e_{n-1}, e_2 + (1/t) e_{n-1} + e_n, e_{n-1}, e_n \right\rangle =^{**} \]
\[ \left\langle e_1 + e_{n-1}, e_2 + (1/t) e_{n-1} + e_n, e_2 + e_n, e_n \right\rangle = \]
\[ \left\langle e_1 + e_{n-1}, e_{n-1} + t(e_2+e_n), e_2 + e_n, e_n \right\rangle. \]

Note that the equality (*) is obtained by replacing $(1/t) e_{n-1} + e_n$ by $((1/t) e_{n-1} + e_n) - (1/t) (e_{n-1})$, while the equality (**) is obtained by replacing $e_{n-1}$ by $(-1/t) (e_{n-1}) + (e_2 + (1/t) e_{n-1} + e_n)$, neither of which changes the flag $F_{\bullet}(t)$.  By the last description of $F_{\bullet}(t)$ given above, we see that $E_{\bullet} = \lim_{t \rightarrow 0} F_{\bullet}(t)$.
\end{case}

\begin{case}[$1122 \rightarrow 1+-1$]
Suppose that the $1122$ and $1+-1$ occur in positions $i < j < k < l$.  Choose the representative $E_{\bullet} = \left\langle v_1,\hdots,v_n \right\rangle$ so that $v_i = e_1 + e_{n-1}$, $v_j = e_{n-1}$, $v_k = e_2 + e_n$, and $v_l = e_n$.  Choose the flag $F_{\bullet} = \left\langle w_1,\hdots, w_n \right\rangle$ so that $w_i = e_1+e_n$, $w_j = e_{n-1}$, $w_k = e_2$, and $w_l = e_n$.  We may rewrite $F_{\bullet} = \left\langle w_i, w_j, w_k, w_l \right\rangle$ as $\left\langle e_1+e_n, e_{n-1}, e_1+e_2+e_n, e_n \right\rangle$.

Let $k(t) \in K$ be the matrix whose upper-left $2 \times 2$ corner is 
\[ \begin{pmatrix}
		1/t & -1/t \\
		1 & 0
	\end{pmatrix},
\]
whose lower-right $2 \times 2$ corner is
\[ \begin{pmatrix}
		-1/t & 1/t \\
		0 & 1
	\end{pmatrix},
\]
and which has $1$'s in all other diagonal entries, and $0$'s elsewhere.  Then 
\[ F_{\bullet}(t) = k(t) \cdot F_{\bullet} = \left\langle (1/t) e_1 + e_2 + (1/t) e_{n-1} + e_n, e_2 + (1/t)e_{n-1} + e_n, e_2 + e_n, (1/t) e_{n-1} + e_n \right\rangle =^* \]
\[ \left\langle (1/t)e_1 + e_2 + (1/t) e_{n-1} + e_n, e_2 + (1/t) e_{n-1} + e_n, e_2 + e_n, e_n \right\rangle = \]
\[ \left\langle e_1 + e_{n-1} + t(e_2 + e_n), e_{n-1} + t(e_2+e_n), e_2 + e_n, e_n \right\rangle, \]
where the equality (*) is obtained by replacing $(1/t) e_{n-1} + e_n$ by
\[ (-1)(e_2+(1/t) e_{n-1} + e_n) + (1)(e_2 + e_n) + (1)((1/t) e_{n-1} + e_n). \]
From the last description of $F_{\bullet}(t)$ above, we see that $\lim_{t \rightarrow 0} F_{\bullet}(t) = E_{\bullet}$.
\end{case}
\begin{case}[$1122 \rightarrow 1-+1$]
This case is extremely similar to the last one, except a bit simpler, so we omit the details.
\end{case}
\begin{case}[$1212 \rightarrow 1221$]
Suppose that the $1212$ and $1221$ occur in positions $i < j < k < l$.  Choose the representative $E_{\bullet} = \left\langle v_1,\hdots,v_n \right\rangle$ so that $v_i = e_1 + e_{n-1}$, $v_j = e_2 + e_n$, $v_k = e_{n-1}$, and $v_l = e_n$.  Choose the flag $F_{\bullet} = \left\langle w_1,\hdots, w_n \right\rangle$ so that $w_i = e_1+e_n$, $w_j = e_2 + e_{n-1}$, $w_k = e_{n-1}$, and $w_l = e_n$.  We may rewrite $F_{\bullet} = \left\langle w_i, w_j, w_k, w_l \right\rangle$ as $\left\langle e_1+e_n, e_1+e_2+e_{n-1}+e_n, e_{n-1}, e_n \right\rangle$.

Let $k(t) \in K$ be the same matrix described in the case $1122 \rightarrow 1+-1$.  Then 
\[ F_{\bullet}(t) = k(t) \cdot F_{\bullet} = \] 
\[ \left\langle (1/t) e_1 + e_2 + (1/t) e_{n-1} + e_n, e_2 + e_n, (-1/t) e_{n-1}, (1/t) e_{n-1} + e_n \right\rangle = \]
\[ \left\langle (1/t) e_1 + e_2 + (1/t) e_{n-1} + e_n, e_2 + e_n, e_{n-1}, e_n \right\rangle = \]
\[ \left\langle e_1 + e_{n-1} + t(e_2 + e_n), e_2 + e_n, e_{n-1}, e_n \right\rangle. \]
From the last description of $F_{\bullet}(t)$, we see that $\lim_{t \rightarrow 0} F_{\bullet}(t) = E_{\bullet}$.
\end{case}

This concludes the proof of Theorem \ref{thm:bruhat}.
\end{proof}

\begin{proof}[Proof of Corollary \ref{cor:orbit-closure}]
   We now deduce from Theorems \ref{thm:bruhat} and \ref{thm:orbit_description} the set-theoretic description of $K$-orbit
   closures given by Corollary \ref{cor:orbit-closure}.  
	Given a clan $\tau$, let $\mathcal{Y}_{\tau}$ denote the purported closure of $Q_{\tau}$, described in the
	statement of Corollary \ref{cor:bruhat}, and let $Y_{\tau} = \overline{Q_{\tau}}$ be the true closure.  Clearly, 
	$Y_{\tau}$ is $K$-stable, and hence is a union of $K$-orbits.  Namely, it is the union of all $K$-orbits
	$Q_{\gamma}$ which are contained in $Y_{\tau}$.  Thus 
	\[ Y_{\tau} = \bigcup_{Q_{\gamma} \subset Y_{\tau}} Q_{\gamma} = \bigcup_{\gamma \leq \tau} Q_{\gamma}. \]
	
	Suppose that $F_{\bullet} \in Q_{\gamma}$ for some $\gamma \leq \tau$.  Then, by Theorem \ref{thm:orbit_description}, 
	\begin{itemize}
		\item $\dim(F_i \cap E_p) = \gamma(i;+) \geq \tau(i;+)$ for $i=1,\hdots,n$.
		\item $\dim(F_i \cap \widetilde{E_q}) = \gamma(i;-) \geq \tau(i;-)$ for $i=1,\hdots,n$.
		\item $\dim(\pi(F_i) + F_j) = j + \gamma(i;j) \leq j + \tau(i;j)$ for $1 \leq i < j \leq n$.
	\end{itemize}
	
	Thus $Y_{\tau} \subseteq \mathcal{Y}_{\tau}$.
	
	For the other inclusion, a flag $F_{\bullet} \in \mathcal{Y}_{\tau}$ is clearly in \textit{some} $K$-orbit $Q_{\gamma}$, and since 
	\begin{itemize}
		\item $\gamma(i;+) = \dim(F_i \cap E_p) \geq \tau(i;+)$ for $i=1,\hdots,n$;
		\item $\gamma(i;-) = \dim(F_i \cap \widetilde{E_q}) \geq \tau(i;-)$ for $i=1,\hdots,n$;
		\item $j + \gamma(i;j) = \dim(\pi(F_i) + F_j) \leq j + \tau(i;j)$ for $1 \leq i < j \leq n$,
	\end{itemize}
	we have $\gamma \leq \tau$.  Thus $F_{\bullet} \in Y_{\tau}$.  We conclude that $\mathcal{Y}_{\tau} = Y_{\tau}$.
\end{proof}

\section*{Acknowledgments}
The author was supported by NSF International Research Fellowship 1159045 and hosted by Institut Fourier in Grenoble,
France.

\bibliographystyle{plain}
\bibliography{../sourceDatabase}

\end{document}